\newif\ifPreprint \Preprinttrue
\newif\ifSubmission \Submissionfalse
\patchcmd{\@settitle}{\uppercasenonmath\@title}{\scshape\large}{}{}
\patchcmd{\@setauthors}{\MakeUppercase}{\scshape\normalsize}{}{}
\begin{document}

\title[Adjustable Robust Nonlinear Network Design]%
{Adjustable Robust Nonlinear Network Design\\
  \revTwo{Without Controllable Elements}\\
  under \rev{Load Scenario} Uncertainties}

\author[J. Thürauf, J. Grübel, M. Schmidt]%
{Johannes Thürauf, Julia Grübel, Martin Schmidt}

\address[M. Schmidt]{%
  Trier University,
  Department of Mathematics,
  Universitätsring 15,
  54296 Trier,
  Germany}
\email{martin.schmidt@uni-trier.de}

\address[J. Thürauf]{%
  University of Technology Nuremberg (UTN),
  Department Liberal Arts and Social Sciences,
  Discrete Optimization Lab,
  Dr.-Luise-Herzberg-Str. 4,
  90461 Nuremberg, Germany}
\email{johannes.thuerauf@utn.de}

\address[J. Grübel]{%
  University of Technology Nuremberg (UTN),
  Analytics and Optimization Lab,
  Dr.-Luise-Herzberg-Str. 4,
  90461 Nuremberg,
  Germany}
\email{julia.gruebel@utn.de}

\date{\today}

\begin{abstract}
  We study network design problems for nonlinear and nonconvex flow
models \revTwo{without controllable elements} under \loadscenario
uncertainties\rev{, \ie, under uncertain injections and withdrawals}.
To this end, we apply the concept of adjustable robust optimization to
compute a network design that admits a feasible transport for all,
possibly infinitely many, \load scenarios within a given uncertainty
set.
For solving the corresponding adjustable robust mixed-integer nonlinear
optimization problem, we show that a given network design is robust
feasible, i.e., it admits a feasible transport for all \loadscenario
uncertainties, if and only if a finite number of worst-case \load
scenarios can be routed through the network.
We compute these worst-case scenarios by solving polynomially many
nonlinear optimization problems.
Embedding this result for robust
feasibility in an adversarial approach leads to an exact
algorithm that computes an optimal robust network design in a
finite number of iterations.
Since all of the results are valid for general
potential-based flows, the approach can be applied to
different utility networks such as gas, hydrogen, or
water networks.
We finally demonstrate the applicability of the method by computing
robust gas networks that are protected from future \load
fluctuations.

\end{abstract}

\keywords{Robust optimization,
Nonlinear flows,
Potential-based networks,
\rev{Load scenario} uncertainties,
Mixed-integer nonlinear optimization%
}
\subjclass[2020]{90C11,
90C17,
90C35,
90C90%
}

\maketitle

\section{Introduction}
\label{sec:introduction}

Network design problems have been widely studied in the optimization
literature due to their relevance in different
applications such as transportation~\parencite{Raghunathan:2013},
telecommunication~\parencite{Koster_et_al:2013}, or supply
chains~\parencite{Santoso_et_al:2005}.
These problems typically involve optimizing a network design and the
corresponding operation so that specific \load predictions are met and the
overall costs are minimized.
In most of the cases, these models contain uncertain
parameters, which represent the deviation of the predictions from the
actual \load in the future.

In this paper, we address these uncertainties
for the class of mixed-integer nonlinear
network design problems with \loadscenario uncertainties by using
adjustable robust optimization~(\ARO).
In a nutshell, the considered adjustable robust mixed-integer
nonlinear optimization problem aims at minimizing the network
expansion costs and has the
following structure.
We first decide on the so-called
here-and-now decisions that represent
the network expansion and have to be decided before the uncertain
\loadscenario is known.
Afterward, the uncertainty realizes in a worst-case manner within an a
priori given uncertainty set.
Finally, we have to guarantee that this worst-case scenario can be
transported through the built network.
Consequently, a solution of this problem yields a robust and resilient
network, which is protected \rev{against} all, possibly infinitely many,
different \load fluctuations in the uncertainty set.
\rev{We note that these uncertain loads consist of uncertain
  injections and withdrawals in the network.}

To model the physics of the network, we use nonlinear and nonconvex
potential-based flows; see~\textcite{Gross2019}.
These flows are an extension of capacitated linear flows, which are
typically used in network design problems.
The main advantages of potential-based flows consist of their
\rev{rather} accurate representation of the underlying physics\rev{,
  which can be derived by solving the respective partial differential
  equations after certain simplifications have been imposed. In
  addition,} their broad applicability to model different types of
utility networks such as gas, hydrogen, water, or lossless DC power
flow networks \rev{is also a major advantage.}
In the following, we particularly focus on the nonlinear and nonconvex
cases.
Thus, we aim to combine mixed-integer nonlinear optimization and
robust optimization to compute resilient network designs while
accurately considering the underlying physics and taking into account
\loadscenario uncertainties.

\rev{We \revTwo{emphasize} that we focus in the following on nonconvex
  potential-based networks without controllable elements such as compressors in
  gas networks or pumps in water networks. The latter are mainly used for
  transport over long distances.
  Nevertheless, the considered models are still
  of practical relevance since \revTwo{small} distribution networks
  \revTwo{often} do not contain controllable elements and also recent
  plans for setting up hydrogen networks refrain from including
  controllable elements such as compressors in their \revTwo{start-up phase}.
  \revTwo{For example, in the Netherlands (see \textcite{LIPIAINEN_et_al:2023} or
    \textcite[Page~120]{IEA:2022}) as well as between Germany and Denmark
    \parencite{Gasunie_Energinet:2021}, pipe-only hydrogen networks are
    discussed as a first stage of building a hydrogen infrastructure.
    As mentioned by \textcite[Page~35]{Gasunie_Energinet:2021}, starting with a
    pipe-only hydrogen network has the benefit of avoiding the costly
    compressors and reduces the investment costs in the
    start-up phase.
    However, after such an initial phase it may be necessary to
    invest in compressors if the hydrogen flows increase as mentioned by
    \textcite[Page~120]{IEA:2022} for the example of the Netherlands.}}

Since the research on network design is rather extensive, we focus
on the literature about robust network design and only start with a brief
review regarding the works for
nonlinear network design without uncertainties.
Multiple approaches to solve nonlinear network design problems
are based on different relaxations of the original problem.
\textcite{Raghunathan:2013, humpola_fuegenschuh:2015} develop
different convex relaxations and embed the results in specific
branch-and-bound frameworks to solve nonlinear network design
problems.
For the case of gas networks, \textcite{Sanches_et_al:2016} develop a
mixed-integer second-order cone relaxation, which provides small gaps
\wrt\ the optimal objective value of the corresponding mixed-integer
nonlinear optimization problem~(MINLP) in many
cases.
In the recent work by \textcite{Li_et_al:2023}, the authors combine a
convex reformulation and an efficient enumeration scheme to solve a
specific gas network design problem.
For a more detailed literature review on nonlinear network design without
uncertainties, we refer to~\textcite{Li_et_al:2023} for the case of
gas networks and to~\textcite{DAmbrosio_et_al:2015} for the case of
water networks.

A large part of the literature on robust network design with uncertain
\rev{load scenarios} focuses on capacitated linear flow models.
The approaches often distinguish between two different
concepts of routing the flows.
On the one hand, there are approaches that consider a so-called static
routing.
In this case, for each uncertain \loadscenario the corresponding flows
have to follow a specific routing template, \eg, a linear
function depending on the uncertain \loadscenario.
This concept has been applied to robust network design problems with
uncertain
traffic \parencite{Koster_et_al:2013, Ben-ameur_Kerivin:2005}.
On the other hand, there are approaches using so-called dynamic
routing, in which for each uncertain \loadscenario the flows can be
chosen individually. Following this more general concept leads to an
adjustable (or
two-stage) robust mixed-integer linear network design problem;
see, \eg,~\textcite{atamtuerk_zhang:2007}.
These problems can be solved by specific branch-and-cut methods
\parencite{Cacchiani_et_al:2016} or by general methods of~\ARO;
see~\textcite{Hertog2019}.
A comparison of static and dynamic routing in addition to a so-called
affine routing is discussed in~\textcite{Poss_Rack:2013}.

We now turn to the considered case of adjustable robust network design
for nonlinear flows, which is much less researched than the case of
linear flows.
For robust gas pipeline network expansion,
\textcite{Sundar_et_al:2021} consider \rev{an interval} uncertainty
set for the demand of sinks only\rev{, \ie, they do not
consider any uncertainty regarding the injections in the
network. Further, there are no capacities for the injections, i.e.,
injections are not bounded from above.}
In this specific case, the authors show that two
worst-case scenarios suffice to guarantee robust feasibility.
\rev{However, if capacity bounds on the injections at
  nodes in the network are given, two worst-case scenarios are
  not sufficient for computing a robust network, which follows from
  our example in Section~\ref{sec:worst-case-scenarios}.}
For tree-shaped potential-based networks and a specific box
uncertainty set for the \rev{injections and withdrawals},
\textcite{Robinius2019} prove that polynomially many worst-case
\rev{load scenarios} guarantee robust feasibility.
\rev{However,} to obtain these scenarios, the authors \rev{require} the
tree structure of the network. \rev{The obtained results are applied}
to compute
a robust diameter selection for hydrogen \rev{tree-shaped} networks.
\textcite{Pfetsch_Schmitt:2023} compute robust
potential-based networks, in which no \loadscenario uncertainties are
considered, but the obtained robust network is protected from specific
arc failures.

A different notion of robustness of potential-based networks is
investigated in~\textcite{Klimm_et_al:2023}, in which network
topologies are characterized as robust if the maximal potential
differences do not increase for decreasing \rev{load scenarios}.
\rev{The authors do not consider extending the
network by additional arcs.}
For the related field of adjustable robust operation of
potential-based networks \rev{without topology design},
we refer to~\textcite{assmannNetworks2019}
as well as~\textcite{kuchlbauer_et_al:2022} and the references
therein.
Details about stochastic network design can be found in the
recent work by \textcite{bertsimas_et_al:2023}.

In this paper, we develop an exact algorithm to solve an adjustable
robust mixed-integer nonlinear network design problem with
\loadscenario uncertainties.
To this end, we focus on nonlinear and nonconvex potential-based flows
and consider general \load uncertainty sets.
Exploiting properties of potential-based flows and the underlying
network, we show that adjustable robust feasibility of a given
network expansion can be equivalently characterized by solving
polynomially many optimization problems.
These optimization problems consist of maximizing, respectively
minimizing, specific network characteristics such as arc
flows or potential differences \wrt\ the uncertainty set.
Solving the latter problems leads to a finite set of
worst-case \load scenarios, which prove adjustable robust feasibility
or infeasibility of the considered network expansion.
Embedding this characterization in an
exact adversarial approach leads to an algorithm that solves the
considered adjustable robust mixed-integer nonlinear optimization
problem in a finite number of iterations.
The algorithm starts with a small subset of \load
scenarios that is iteratively augmented by worst-case \load scenarios
obtained by the developed characterization of robust feasibility.
We finally demonstrate the applicability of the developed approach by
computing adjustable robust gas networks that are protected from
future \load fluctuations.
The numerical results show that only a small number of worst-case
scenarios suffices to obtain an adjustable robust network design in
practice.
\rev{Consequently, our main contributions can be summarized as follows.
  \begin{itemize}
  \item[(i)] We characterize robust feasibility of a given
    potential-based network with
    uncertain injections and withdrawals by finitely many
    worst-case scenarios.
    In particular, the network topology can be arbitrary,
    the uncertainty can be both in the injections and withdrawals,
    and the potential-based flow model can be nonconvex.
    Moreover, the only assumptions regarding the uncertainty sets
    that we need to impose are non-emptiness and compactness.
    To the best of our knowledge, this very general setup has not been
    considered so far in the literature.
  \item[(ii)] We derive how to compute these worst-case
    scenarios by solving a polynomial number of nonlinear
    optimization problems.
  \item[(iii)] We use these results to design an adversarial
    approach that is an exact algorithm for solving the robust
    mixed-integer nonlinear network design problem to global
    optimality.
  \item[(iv)] We demonstrate the applicability of our approach by
    applying the developed algorithm together with enhanced solution
    techniques to compute robust pipe-only gas networks of
    realistic size.
  \end{itemize}}

The remainder of the paper is organized as follows.
In Section~\ref{sec:problem-statement}, we introduce
potential-based flows and state the considered adjustable robust
mixed-integer nonlinear network design problem under \loadscenario
uncertainties.
In Section~\ref{sec:exact-adversarial-approach}, we derive an
characterization of adjustable robust feasibility of a
given network expansion based on finitely many worst-case \load
scenarios.
Subsequently, we embed this result in an
exact adversarial approach that solves the uncertain network design
problem.
We present different solution techniques that speed up the
performance of the developed approach in
Section~\ref{sec:enhanced-solution-techniques}.
Using an academic example, we then discuss that the number of
necessary worst-case \load scenarios in the algorithm can
significantly vary depending on the capacity of the sources; see
Section~\ref{sec:worst-case-scenarios}.
We finally demonstrate the applicability of the developed approach using
the example of gas networks in Section~\ref{sec:computational-study},
followed by a discussion of possible future research directions in
Section~\ref{sec:conclusion}.

\section{Problem Statement}
\label{sec:problem-statement}

We now introduce the considered nonlinear potential-based flow model in
Section~\ref{subsec:potential-flows} before we state the adjustable
mixed-integer nonlinear network design problem in
Section~\ref{subsec:robust-design-model}.

\subsection{Potential-Based Networks}
\label{subsec:potential-flows}

We consider potential-based flows to model the
underlying physical laws of the network flow.
Potential-based flows form an extension of classic linear
capacitated flow models and we now formally introduce them based
on~\textcite{Gross2019, Labbe2019}.
Let $\graph=(\nodes,\arcs)$ be a directed
multi-graph consisting of
a set of nodes~$\nodes$ and a set of arcs~$\arcs$.
The set of nodes~$\nodes$ is partitioned into nodes~$\entries$ at
which flow is injected, nodes~$\exits$ at which flow is withdrawn, and
inner nodes~$\innodes$ at which neither flow is injected nor withdrawn.
Furthermore, the set~$\arcs$ represents the arcs of the network and
consists of triples~$(u,v,\labelArc)$.
Here, $u$ and $v$ represent the start and end node of the arc~$\arc$
and~$\labelArc$ is the label of the arc. \rev{In the following, we
  denote by $L$ the set of all possible labels.
  Consequently, for every triple~$(u,v,\labelArc)$, it
  holds~$(u,v,\labelArc) \in V \times V \times L$.
  With a slight abuse of notation we also call such triples an arc and
  write $(u,v,\labelArc) \in A$ as well.}
This modeling choice allows to consider multiple parallel arcs between
two nodes, which often \rev{occur} in real-word utility networks.

In addition to the flow variables~$\massflow \in
\reals^{\arcs}$, we consider nodal
potential levels~\mbox{$\potential \in \reals^{\nodes}$}.
Due to technical restrictions, both the flow and the
potential variables are bounded, \ie,
\begin{equation*}
  \lbPot_{\node} \leq \potential_{\node} \leq \ubPot_{\node},
  \quad \node \in \nodes, \quad
  \lbMassflow_{\arc} \leq \massflow_{\arc} \leq \ubMassflow_{\arc},
  \quad \arc \in \arcs.
\end{equation*}
To model the case of unbounded potentials
or uncapacitated flows, we can set the potential bounds~$\lbPot_\node
\leq \ubPot_\node, \node \in \nodes,$ and the arc flow
bounds~\mbox{$\lbMassflow_\arc \leq \ubMassflow_\arc, \arc \in
  \arcs,$}
to $\pm \infty$.

For a given arc~$\arc \in \arcs$, the incident potentials and
the corresponding arc flow are coupled by a \socalled potential
function~$\potFunc_{\arc} \colon \reals \rightarrow \reals$.
The potential function is usually nonlinear and nonconvex.
We further assume that the properties
\begin{enumerate}[label=(\roman*)]
\item
  $\potFunc_{\arc}$ is continuous,
\item
  $\potFunc_{\arc}$ is strictly increasing, and
\item
  $\potFunc_{\arc}$ is odd, \ie, $\potFunc_{\arc}(-x) =
  -\potFunc_{\arc}(x)$,
\end{enumerate}
hold, which are
natural in the context of utility networks.
The coupling between potentials and arc flows is given by
\begin{equation*}
  \potential_{\node} - \potential_{\otherNode} =
  \potFunc_{\arc}(\massflow_{\arc}), \quad \arc=(\node,\otherNode,
  \labelArc) \in \arcs.
\end{equation*}

\rev{We further consider a \loadscenario~$\loadFlowVec \in
\reals^{\nodes}$. Injections at node $\node \in \nodes$ are represented by 
$\loadFlowVec_{\node} < 0$, while $\loadFlowVec_{\node} > 0$ indicates 
withdrawals at node $\node \in \nodes$. For each inner 
node~\mbox{$\node \in \innodes$}, \mbox{$\loadFlowVec_{\node} = 0$} 
holds.}
Since we consider stationary flows, this \loadscenario~$\loadFlowVec
\in
\reals^{\nodes}$ has to be balanced, \ie, the total amount of
injections equals the total amount of withdrawals:
$\sum_{\node \in \entries} \loadFlowVec_{\node}
\rev{+} \sum_{\node \in \exits} \loadFlowVec_{\node} \rev{=0}$.
We further have to impose mass flow conservation by
\begin{equation*}
  \sum_{\arc \in \rev{\inArcs[\node]}} \massflow_{\arc}
  - \sum_{\arc \in \rev{\outArcs[\node]}} \massflow_{\arc} =
  \rev{d_{\node}, \quad \node \in \nodes.}
\end{equation*}

Combining the previous constraints leads to the formal definition of a
potential-based flow.

\begin{definition}
  For a given \loadscenario~$\loadFlowVec \in \reals^{\nodes}$
  with~$\loadFlowVec_{\node} = 0$ for all $\node \in \innodes$,
  a tuple~$(\massflow, \potential)$ is a feasible
  potential-based flow if and only if it satisfies
  \begin{align*}
  \sum_{\arc \in \rev{\inArcs[\node]}} \massflow_{\arc}
  &- \sum_{\arc \in \rev{\outArcs[\node]}} \massflow_{\arc} =
  \rev{d_{\node}, \quad \node \in \nodes.}
    \\
    \potential_{\node} - \potential_{\otherNode}
    &= \potFunc_{\arc}(\massflow_{\arc}), \quad
      \arc=(\node,\otherNode, \labelArc) \in
      \arcs,
    \\
    \lbPot_{\node} \leq \potential_{\node}
    &\leq \ubPot_{\node}, \quad \node \in \nodes, \\
    \lbMassflow_{\arc} \leq \massflow_{\arc}
    & \leq \ubMassflow_{\arc}, \quad \arc \in \arcs.
  \end{align*}
\end{definition}

One of the main advantages of using potential-based flows lies in
their strong modeling capabilities \wrt\ flows in  utility networks.
In~\textcite{Gross2019}, explicit potential functions for stationary
gas ($\potFunc^{\text{G}}$), water ($\potFunc^{\text{W}}$), and
lossless DC power-flow networks ($\potFunc^{\text{DC}}$) are presented.
For an arc~$\arc \in \arcs$ and a corresponding arc
flow~$\massflow_{\arc}$, these potential functions are explicitly
given by
\begin{equation}
  \label{different-potential-functions}
  \potFunc^{\text{G}}(\massflow_{\arc}) = \Lambda_{\arc}
  \massflow_{\arc} \abs{\massflow_{\arc}},
  \quad
  \potFunc^{\text{W}}(\massflow_{\arc}) = \Lambda_{\arc}
  \text{sgn}(\massflow_{\arc}) \abs{\massflow_{\arc}}^{1.852},
  \quad
  \potFunc^{\text{DC}}(\massflow_{\arc}) = \Lambda_{\arc}
  \massflow_{\arc},
\end{equation}
where~$\Lambda_{\arc} > 0$ is an arc specific constant depending on
the application.

\subsection{Robust Network Design}
\label{subsec:robust-design-model}

We now present an adjustable robust network expansion model that takes
\loadscenario uncertainties into account.
For modeling the underlying physics of the network flows, we use the
potential-based flows as previously introduced.

In general, \load forecasts that are considered in the
network design process are affected by uncertainties.
Taking these \loadscenario uncertainties into account is of high
relevance since even small perturbations of
the injections and withdrawals can render the planned network design
infeasible, \ie, the \loadscenario cannot be transported through the
network.
We now address these \loadscenario uncertainties by applying the
well-established concept of (adjustable) robust optimization; see,
\textcite{Hertog2019} and the references therein.
To this end, we consider the uncertainty set
\begin{equation}
  \label{eq:demand-uncertainty-set}
  \uncertaintySet \define Z
  \cap
  \Defset{\loadFlowVec \in \reals^{\nodes}}{\sum_{\node \in \entries}
    \loadFlowVec_{\node} \rev{+} \sum_{\node
      \in \exits}   \loadFlowVec_{\node} \rev{=0},
    \ \loadFlowVec_{\node} = 0, \
    \node \in \innodes},
\end{equation}
of balanced \rev{load scenarios}.
\rev{Here, $Z \subset \reals^{\nodes}$ is any non-empty and compact
  set that describes the uncertainty in the injections and
  withdrawals at the nodes of the network.
  Then, we ensure by~\eqref{eq:demand-uncertainty-set} 
  that the uncertainty set~$U$ contains all balanced load scenarios
  of~$Z$, \ie, scenarios in~$Z$ for which the total amount of
  injections equals the total amounts of withdrawals.
  The latter is necessary because an
  unbalanced load scenario immediately renders every stationary flow model
  infeasible.}
We note that \rev{definition \eqref{eq:demand-uncertainty-set} allows
  to consider} convex, nonconvex, or even discrete uncertainty sets.

With this uncertainty set at hand, the task of computing an adjustable
robust network design consists of finding a cost-optimal network
design such that for each \loadscenario~$\loadFlowVec \in U$,
there is a feasible transport through the built network.

For stating a corresponding adjustable robust optimization model, we
partition the set of arcs~$\arcs$ into existing arcs~$\exArcs$
and into candidate arcs~$\expArcs$ that can be built to enhance the
capacity of the network.
This allows to design a network from scratch ($\exArcs =
\emptyset$) as well as to increase the capacity of existing networks
($\exArcs \neq \emptyset$).
We further introduce binary variables $\expVar \in X \subseteq
\set{0,1}^{\expArcs}$.
Here, for an arc~$\arc \in \expArcs$, the
binary variable~$\expVar_{\arc}$ equals one if the candidate
arc~$\arc$ is built and otherwise, it is zero.
Further, expanding the network by an arc~$\arc \in \expArcs$
results in costs~$c_{\arc} > 0$.
The set~$X$ can contain additional constraints on the
expansion decisions such as that only one out of multiple parallel
arcs in $\expArcs$ can be built.
The latter occurs in the discrete selection of pipeline diameters in
gas networks; see, e.g., \textcite{Li_et_al:2023}.
We are now ready to state a model that computes an adjustable robust
network design that guarantees that each \loadscenario in the
uncertainty set~$\uncertaintySet$ can be transported:
\begin{subequations} \label{expansion}
  \begin{align}
    \minCostsRobustExpansion(\uncertaintySet) \define \min_{\expVar,
    \massflow, \potential} \quad
    & \sum_{\arc \in \expArcs} c_{\arc}
      \expVar_{\arc} \label{eq:expansion:objective-function} \\
    \st \quad
    & \expVar \in X,
      \label{eq:expansion:expansion-constraints}
    \\
    & \forall \loadFlowVec \in \uncertaintySet \, \exists
      \left(\massflow^{\loadFlowVec},
      \potential^{\loadFlowVec}\right) \text{ with
      } \label{eq:expansion:quantifiers-robust} \\
    & \quad
      \sum_{\arc \in \rev{\inArcs[\node]}}
      \massflow^{\loadFlowVec}_{\arc} - \sum_{\arc \in
      \rev{\outArcs[\node]}} \massflow^{\loadFlowVec}_{\arc} =
      \rev{\loadFlowVec_{\node}, \quad \node \in \nodes,}
                              \label{eq:expansion:massflow-conservation}
    \\
    & \quad
      \potential^{\loadFlowVec}_{\node} - \potential^{\loadFlowVec}_{\otherNode} =
      \potFunc_{\arc}(\massflow^{\loadFlowVec}_{\arc}), \quad \arc=(\node,\otherNode, \labelArc)
      \in \exArcs,
      \label{eq:expansion:potential-coupling-existingArcs}
    \\
    & \quad
      \potential^{\loadFlowVec}_{\node} - \potential^{\loadFlowVec}_{\otherNode} \leq
      \potFunc_{\arc}(\massflow^{\loadFlowVec}_{\arc}) +
      (1-\expVar_{\arc}) M^{+}_{\arc},
      \quad \arc=(\node,\otherNode, \labelArc)
      \in \expArcs,
      \label{ineq:expansion:potential-coupling-candidateArcs-uBound}
    \\
    & \quad
      \potential^{\loadFlowVec}_{\node} - \potential^{\loadFlowVec}_{\otherNode} \geq
      \potFunc_{\arc}(\massflow^{\loadFlowVec}_{\arc}) +
      (1-\expVar_{\arc}) M^{-}_{\arc},
      \quad \arc=(\node,\otherNode, \labelArc)
      \in \expArcs,
      \label{ineq:expansion:potential-coupling-candidateArcs-lBound}
    \\
    & \quad
      \lbMassflow_{\arc} \leq \massflow^{\loadFlowVec}_{\arc} \leq \ubMassflow_{\arc},
      \quad \arc \in \exArcs,
      \label{eq:expansion:massflow-bounds-exArcs}
    \\
    & \quad \lbMassflow_{\arc} \expVar_{\arc} \leq \massflow^{\loadFlowVec}_{\arc} \leq
      \ubMassflow_{\arc} \expVar_{\arc},
      \quad \arc \in \expArcs,
      \label{eq:expansion:massflow-bounds-candidateArcs}
    \\
    & \quad  \lbPot_{\node} \leq \potential^{\loadFlowVec}_{\node} \leq \ubPot_{\node},
      \quad \node \in \nodes.
      \label{eq:expansion:potential-bounds}
  \end{align}
\end{subequations}

In the objective function~(\ref{eq:expansion:objective-function}), we
minimize the costs associated to the chosen network design.
Constraints~(\ref{eq:expansion:expansion-constraints}) impose
additional restrictions on the network design.
Then, for every \loadscenario~$\loadFlowVec \in \uncertaintySet$,
we determine flows~$\massflow^{\loadFlowVec}$ and
potentials~$\potential^{\loadFlowVec}$ such that
mass flow conservation~\eqref{eq:expansion:massflow-conservation} and
the
potential bounds~\eqref{eq:expansion:potential-bounds} are satisfied.
Furthermore, we ensure by
\mbox{Constraints~\eqref{eq:expansion:potential-coupling-existingArcs}--\eqref{ineq:expansion:potential-coupling-candidateArcs-lBound}}
that the coupling between the potentials and the arc flows is
satisfied for all existing arcs and all candidate arcs that are built.
Moreover, we guarantee that for existing and new arcs
specific flow capacities are satisfied. If a candidate arc~$\arc
\in \expArcs$ is
not built, \ie, $\expVar_{\arc}=0$, then the corresponding arc flow is
set to zero; see
Constraints~\eqref{eq:expansion:massflow-bounds-candidateArcs}.

In line with~\textcite{Schmidt_Thuerauf:2022}, we now discuss that for
each arc~$(u,v, \labelArc)$, the big-$M$ values
\begin{equation} \label{eq:big-Ms-expansion-model}
  M^{+}_{\arc} = \ubPot_u - \lbPot_v, \quad
  M^{-}_{\arc} = \lbPot_u - \ubPot_v,
\end{equation}
are valid. Here, valid means that if a candidate arc~$\arc=(u,v, \labelArc) \in
\expArcs$ is not built, \ie, $x_{\arc} = 0$, the
Constraints~\eqref{ineq:expansion:potential-coupling-candidateArcs-uBound}
and~\eqref{ineq:expansion:potential-coupling-candidateArcs-lBound}
are redundant and we have no coupling between the incident potentials
and the corresponding arc flow.
To see this, let us consider a candidate arc~$\arc \in \expArcs$ with
$x_{\arc} = 0$ and an arbitrary uncertain \loadscenario~$\loadFlowVec \in
\uncertaintySet$.
Then, from
Constraints~\eqref{eq:expansion:massflow-bounds-candidateArcs}, it
follows~$\massflow^{\loadFlowVec}_{\arc} = 0$.
Consequently, from Constraints~\eqref{eq:expansion:potential-bounds}
and $\potFunc_{\arc}(0) = 0$,
we obtain $M^{-}_{\arc} = \lbPot_u - \ubPot_v \leq \potential^{\loadFlowVec}_{u} -
\potential^{\loadFlowVec}_{v} \leq \ubPot_u - \lbPot_v = M^{+}_{\arc}$, which
corresponds to
Constraints~\eqref{ineq:expansion:potential-coupling-candidateArcs-uBound}
and~\eqref{ineq:expansion:potential-coupling-candidateArcs-lBound}.

From the perspective of robust optimization, Problem~\eqref{expansion} is
an adjustable robust optimization problem.
The expansion variables~$\expVar$ represent the first-stage or
\socalled ``here-and-now'' decisions and the flows~$\massflow^{\loadFlowVec}$ as well
as the potentials~$\potential^{\loadFlowVec}$ are second-stage or \socalled
``wait-and-see'' decisions, which are adapted for each
uncertainty~$\loadFlowVec \in \uncertaintySet$.
\begin{remark}
  For linear potential functions~$\potFunc_{\arc}$,
  Problem~\eqref{expansion} is an adjustable robust mixed-integer linear
  optimization problem that can be tackled by standard methods of
  adjustable linear robust optimization, \eg, by methods based on
  column-and-constraint generation; see~\textcite{Zhao2012AnEA,
    Lefebvre:2022}.
\end{remark}
In the light of this remark, we now focus on nonlinear and nonconvex
potential functions~$\potFunc_{\arc}$, which occur, \eg, in gas,
hydrogen, or water networks; see~\eqref{different-potential-functions}.
Thus, we obtain an adjustable robust mixed-integer nonlinear optimization
problem~\eqref{expansion}, for which the set of applicable methods of
the literature is scarce.

\section{Exact Adversarial Approach}
\label{sec:exact-adversarial-approach}

We now follow the idea of the well-known adversarial approach in
robust optimization~\parencite{Bienstock2008}, to solve
Problem~\eqref{expansion} to global optimality.
The main idea of the adversarial approach is to replace the
original uncertainty set~$U$ by a finite set~$S \subseteq U$ of
``worst-case'' scenarios.
To this end, the approach starts with a small set of scenarios~$S$ and
then solves Problem~\eqref{expansion} \wrt\ $S$ instead of $U$.
The latter problem is now a mixed-integer nonlinear optimization problem
consisting of finitely many variables and constraints due to $\abs{S}
< \infty$.
If the obtained solution is robust feasible, \ie, it is feasible for
the original problem \wrt~$U$, then it
is also optimal due to $S \subseteq U$.
Otherwise, there are scenarios~$U \setminus S$ that render the obtained
point infeasible.
If this is the case, at least one of these scenarios is added to~$S$
and the procedure repeats by solving the robust problem \wrt\ the
updated $S$.
When applying the adversarial approach, the most important questions
to answer are:
\begin{enumerate}[label=(\roman*)]
  \item \label{adversarial:robust-feasibility}
    How to verify that a given point is robust feasible?
  \item \label{adversarial:computing-infeasible-scn}
    How to compute a scenario in~$U \setminus S$ that
    certifies the infeasibility of a given point?
  \item \label{adversarial:finite-termination}
    Does the adversarial approach terminate after a finite
    number of steps?
\end{enumerate}
For strict robust optimization,
Questions~\ref{adversarial:robust-feasibility}
and~\ref{adversarial:computing-infeasible-scn} are usually addressed
by maximizing the constraint violation \wrt\ the uncertainty
set and a fixed ``here-and-now'' decision.
Furthermore, for linear constraints and polyhedral uncertainty sets,
the method terminates after a finite number of steps;
see~\textcite{Bertsimas2016}.

However, for the considered case of~\ARO,
applying the adversarial approach is even more challenging
since we cannot directly transfer the idea to
compute a violating scenario of the strictly robust case to the
adjustable robust one.
This is mainly based on the fact that in~\ARO, we can determine the
second-stage decisions after the
uncertainty realizes.
For adjustable robust linear problems with polyhedral uncertainty sets,
it is shown that adding finitely many ``worst-case'' scenarios
suffices; see~\textcite{ayoub2016}.
An analogous result is shown for adjustable robust nonconvex optimization
with uncertainty sets being polytopes under specific quasi-convexity
assumptions; see~\textcite{Takeda2008}.
However, for the considered adjustable robust mixed-integer nonlinear
problem and the general choice of uncertainty
set~(\ref{eq:demand-uncertainty-set}), these approaches cannot be
directly applied.

We now exploit structural properties of potential-based flows and the
underlying graph to answer
Questions~\ref{adversarial:robust-feasibility}--\ref{adversarial:finite-termination}
for the considered Problem~(\ref{expansion}).
In particular, we show that for given first-stage decisions~$x \in
X$, we can verify robust feasibility, respectively compute a
violating scenario, by solving polynomially (in the encoding-length of
the underlying graph) many single-level nonlinear optimization
problems.
To this end, we start with the case of weakly connected graphs and
then extend these results to general graphs.
For a given expansion decision~$x \in X$, we now consider three
different classes of nonlinear optimization
problems.
Solving these ``adversarial'' problems either verifies robust
feasibility of~$x$ or yields a \loadscenario~$\loadFlowVec \in U$
that certifies the infeasibility of~$x$.

\rev{
  Note that for a given expansion decision~$x \in X$, we can
  directly remove every arc~$\arc$ that is not built, i.e., every arc
  for which $x_{\arc} = 0$ holds, because the corresponding
  constraints of~(\ref{ineq:expansion:potential-coupling-candidateArcs-uBound})
  and of~(\ref{ineq:expansion:potential-coupling-candidateArcs-lBound})
  are redundant. Further, the corresponding flow variable~$\massflow_{\arc}$
  is zero due to~(\ref{eq:expansion:massflow-bounds-candidateArcs}).
  Consequently, we can directly remove these constraints and the
  corresponding flow variable~$\massflow_{\arc}$
  in~\mbox{(\ref{eq:expansion:massflow-conservation})--(\ref{eq:expansion:massflow-bounds-candidateArcs})},
  which we implicitly assume for the remaining section when considering a
  network with a fixed expansion decision~$x$ such as in the adversarial
  problems~(\ref{eq:maximum-potential-difference})--(\ref{eq:maximum-arc-flow})
  below.}

First, for a given pair of nodes~$(u, v) \in \nodes^{2}$ \rev{with $\node
\neq \otherNode$}, we compute the
maximum potential difference between~$\node$
and~$\otherNode$ within the uncertainty set~$\uncertaintySet$ by
\begin{equation}
  \label{eq:maximum-potential-difference}
  \maxPotDiff_{\node,\otherNode}(\expVar) \define \max_{\loadFlowVec,
  \massflow, \potential} \ \potential_{\node} -
  \potential_{\otherNode}
  \quad
  \st \quad
  \eqref{eq:expansion:massflow-conservation}\text{--}%
  \eqref{ineq:expansion:potential-coupling-candidateArcs-lBound},
  \
  \loadFlowVec \in \uncertaintySet.
\end{equation}
In Problem~\eqref{eq:maximum-potential-difference}, we explicitly
dismiss the flow and potential
bounds~\eqref{eq:expansion:massflow-bounds-exArcs}--\eqref{eq:expansion:potential-bounds}.
The intuition behind this is to compute scenarios
that induce the most stress on the network \wrt\ the potential
levels, \ie, we are particularly interested in scenarios that
violate the potential bounds.
For the given expansion decision~$x$, we will later show that we can
only find feasible second-stage decisions~$\potential$ if
and only if the objective value of
Problem~\eqref{eq:maximum-potential-difference} stays below
specific bounds.
Thus, solving Problem~\eqref{eq:maximum-potential-difference} for
each pair of nodes leads to finitely many ``worst-case'' scenarios
regarding the potential levels~$\potential$ and the given expansion
decision~$x$.
We note that these worst-case scenarios are also considered in, \eg,
\textcite{Labbe2019, assmannNetworks2019, Robinius2019}
in the context of gas market problems, of robust control of gas
networks, and of robust selection of diameters in tree-shaped
networks.
Moreover, Problem~(\ref{eq:maximum-potential-difference}) can
be \rev{solved} in polynomial time for box uncertainty sets and tree-shaped
networks~\parencite{Robinius2019}.
However, it is NP-hard for general potential-based flows in
general graphs; see~\textcite{Thuerauf2022}.

Second and third, we compute for each arc~$\arc \in \arcs$ the minimum and
maximum arc flow within the considered uncertainty
set~$\uncertaintySet$ by
\begin{equation}
  \label{eq:minimum-arc-flow}
  \minArcFlow_{\arc}(\expVar) \define
  \min_{\loadFlowVec, \massflow, \potential} \
  \massflow_{\arc} \quad \st \quad
  \eqref{eq:expansion:massflow-conservation}\text{--}%
  \eqref{ineq:expansion:potential-coupling-candidateArcs-lBound},
  \
  \loadFlowVec \in \uncertaintySet
\end{equation}
and
\begin{equation}
  \label{eq:maximum-arc-flow}
  \maxArcFlow_{\arc}(\expVar) \define
  \max_{\loadFlowVec, \massflow, \potential} \
  \massflow_{\arc} \quad \st \quad
  \eqref{eq:expansion:massflow-conservation}\text{--}%
  \eqref{ineq:expansion:potential-coupling-candidateArcs-lBound},
  \
  \loadFlowVec \in \uncertaintySet.
\end{equation}
We again dismiss potential and flow bounds in
Problems~\eqref{eq:minimum-arc-flow} and~\eqref{eq:maximum-arc-flow}
because we are particularly interested in finding scenarios that
violate the flow bounds.
Analogously, we will show that for the given expansion decision~$x$, we can
only find feasible second-stage decisions~$\massflow$ if and only if
the objective values of (\ref{eq:minimum-arc-flow})
and~(\ref{eq:maximum-arc-flow}) satisfy specific bounds.
Thus, solving these problems leads to a finite set of ``worst-case''
scenarios regarding the flows.

We now prove that we can verify robust feasibility of given
first-stage decisions~\mbox{$x \in X$} by solving the polynomially
many
Problems~(\ref{eq:maximum-potential-difference})--(\ref{eq:maximum-arc-flow}).
\rev{To this end, we introduce the notation }
\begin{equation*}
  \rev{  \arcs(x) \define \exArcs \cup \defset{\arc \in
    \expArcs}{\expVar_{\arc} = 1},
  \quad
  \bar{\arcs}(x) \define \defset{\arc \in \expArcs}{\expVar_{\arc} =0}.
}
\end{equation*}
\rev{Here, $\arcs(x)$ is the set of existing and built arcs
  and $\bar{\arcs}(x)$ is the set of expansion arcs that are not built.
Consequently, $G(x)=(\nodes, \rev{\arcs(x)})$ describes the expanded graph
that is realized according to the given expansion decision~$x$.}
\rev{In the following, }we use the auxiliary lemma from the literature,
which states that for a given \loadscenario the corresponding flows and
potential differences are unique.
\begin{lemma} \label{lemma:uniquness-of-flows-passive-case}
  Let $\expVar \in X$ be fixed and
  let~$\graph'(\expVar)=(\nodes, \rev{\arcs(x)})$ be the expanded graph.
  Further, we assume that $\graph'(\expVar)$ is weakly connected.
  For a fixed \loadscenario~$\loadFlowVec \in \uncertaintySet$,
  there are potentials~$\potential'$ and unique flows~$\massflow$ such
  that the set of feasible points that satisfies
  Constraints~\eqref{eq:expansion:massflow-conservation}--%
  \eqref{ineq:expansion:potential-coupling-candidateArcs-lBound}
  and $\massflow_{\arc} = 0$ for each arc \rev{not built, \ie, $\arc
    \in \bar{\arcs}(x),$} is non-empty and given by
  \begin{equation*}
    \defset{(q, \potential)}{\potential = \potential' + \ones \eta, \
      \eta \in \reals}
  \end{equation*}
  where $\ones$ is a vector of ones in appropriate dimension.
\end{lemma}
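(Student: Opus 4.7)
The idea is to exploit that every $\potFunc_{\arc}$ is continuous, strictly increasing, and odd, so that $\potFunc_{\arc}$ is invertible on its image, to recast the feasibility system as the KKT conditions of a strictly convex network-flow problem. Existence of a solution then follows from standard convex optimization, and uniqueness of the flows follows from a short monotonicity argument, after which weak connectivity yields the claimed parametrization of the potentials.

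\textbf{Existence via a convex flow formulation.} After fixing~$\expVar$ and discarding the candidate arcs that are not built, work on~$\graph'(\expVar)=(\nodes,\arcs(\expVar))$, which is weakly connected by assumption. Define
\begin{equation*}
  \tilde{\potFunc}_{\arc}(s) \define \int_{0}^{s} \potFunc_{\arc}(t)\,\mathrm{d}t,
  \quad \arc \in \arcs(\expVar),
\end{equation*}
which is strictly convex since $\potFunc_{\arc}$ is strictly increasing, and satisfies $\tilde{\potFunc}_{\arc}(s)\to\infty$ as $|s|\to\infty$ (e.g.\ from the bound $\tilde{\potFunc}_{\arc}(s)\ge \potFunc_{\arc}(t_{0})(|s|-t_{0})$ for $|s|>t_{0}>0$, using oddness). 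Consider
\begin{equation*}
  \min_{\massflow \in \reals^{\arcs(\expVar)}} \sum_{\arc \in \arcs(\expVar)} \tilde{\potFunc}_{\arc}(\massflow_{\arc})
  \quad \st \quad
  \sum_{\arc \in \inArcs[\otherNode]} \massflow_{\arc} - \sum_{\arc \in \outArcs[\otherNode]} \massflow_{\arc} = \loadFlowVec_{\otherNode},
  \ \otherNode \in \nodes.
\end{equation*}
Since $\loadFlowVec$ is balanced and $\graph'(\expVar)$ is weakly connected, the feasible set is a non-empty affine subspace (a classical network-flow fact). Coercivity of the objective along this subspace together with continuity yields a minimizer~$\massflow$, and linearity of the constraints implies the KKT conditions hold, furnishing Lagrange multipliers that, after the standard sign adjustment, give a~$\potential'$ with $\potential'_{\node}-\potential'_{\otherNode}=\potFunc_{\arc}(\massflow_{\arc})$ on every arc~$\arc=(\node,\otherNode,\labelArc)\in \arcs(\expVar)$.

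\textbf{Uniqueness of the flows and parametrization of the potentials.} Let $(\massflow^{1},\potential^{1})$ and $(\massflow^{2},\potential^{2})$ be two feasible pairs and set $\Delta\massflow \define \massflow^{1}-\massflow^{2}$. Then $\Delta\massflow$ satisfies mass-flow conservation with zero right-hand side, i.e.\ it is a circulation. The discrete integration-by-parts identity
\begin{equation*}
  \sum_{\arc=(\node,\otherNode,\labelArc)\in \arcs(\expVar)} \Delta\massflow_{\arc}\,(\potential_{\node}-\potential_{\otherNode})
  \;=\; \sum_{\otherNode \in \nodes}\potential_{\otherNode}\Bigl(\sum_{\arc \in \outArcs[\otherNode]}\Delta\massflow_{\arc} - \sum_{\arc \in \inArcs[\otherNode]}\Delta\massflow_{\arc}\Bigr) \;=\; 0
\end{equation*}
holds for any $\potential$ and any circulation $\Delta\massflow$. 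Applied to $\potential^{1}$ and $\potential^{2}$ and combined with the potential couplings, it yields
\begin{equation*}
  0 = \sum_{\arc \in \arcs(\expVar)} (\massflow^{1}_{\arc}-\massflow^{2}_{\arc})\bigl(\potFunc_{\arc}(\massflow^{1}_{\arc})-\potFunc_{\arc}(\massflow^{2}_{\arc})\bigr).
\end{equation*}
Strict monotonicity of~$\potFunc_{\arc}$ forces each summand to be non-negative and to vanish only when $\massflow^{1}_{\arc}=\massflow^{2}_{\arc}$; hence $\massflow^{1}=\massflow^{2}$. With the flows pinned, the potential coupling fixes every potential difference along every arc, and weak connectivity of~$\graph'(\expVar)$ propagates this to every pair of nodes, so any feasible potential differs from~$\potential'$ only by a common additive constant~$\ones\eta$, as claimed.

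\textbf{Main obstacle.} The delicate step is establishing coercivity of the convex flow program on the affine feasible subspace, which requires both the strict monotonicity and oddness of~$\potFunc_{\arc}$ (to get $\tilde{\potFunc}_{\arc}$ strictly convex with $\tilde{\potFunc}_{\arc}(s)\to\infty$ as $|s|\to\infty$) and the non-emptiness of the feasible set via balancedness of~$\loadFlowVec$; once these two ingredients are in place, existence and the subsequent uniqueness argument follow routinely.
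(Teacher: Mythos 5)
Your proof is correct, but it takes a genuinely different route from the paper: the paper disposes of this lemma in one line, citing Theorem~7.1 of \textcite{humpola7} together with the observation that every arc with $\expVar_{\arc}=0$ carries zero flow, whereas you reconstruct the classical argument underlying that cited result. Concretely, you prove existence via the convex potential-energy formulation (minimizing $\sum_{\arc}\int_0^{\massflow_\arc}\potFunc_\arc$ over the affine set of balanced flows, with the potentials emerging as Lagrange multipliers of the mass-conservation constraints) and uniqueness via the discrete integration-by-parts identity plus strict monotonicity of $\potFunc_\arc$; both steps are sound, including the coercivity bound $\tilde{\potFunc}_\arc(s)\ge\potFunc_\arc(t_0)(\abs{s}-t_0)$, the $C^1$-smoothness of $\tilde{\potFunc}_\arc$ from continuity of $\potFunc_\arc$, and the fact that your uniqueness argument uses only feasibility of the two pairs rather than optimality, which is exactly what the lemma requires. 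You also correctly adopt the paper's implicit convention (stated just before the adversarial problems) of deleting unbuilt arcs, and your treatment is consistent with the fact that for built candidate arcs the big-$M$ constraints \eqref{ineq:expansion:potential-coupling-candidateArcs-uBound}--\eqref{ineq:expansion:potential-coupling-candidateArcs-lBound} collapse to the equality coupling \eqref{eq:expansion:potential-coupling-existingArcs}; making that one-line observation explicit would fully align your reduced system with the constraint set named in the lemma. What the two approaches buy: the paper's citation is maximally brief and delegates the analysis; your version is self-contained and makes visible exactly where each structural hypothesis enters — oddness and strict monotonicity for strict convexity and coercivity, continuity for differentiability of the energy, balancedness of $\loadFlowVec\in\uncertaintySet$ and weak connectivity of $\graph'(\expVar)$ for non-emptiness of the affine feasible set and for propagating the fixed arc-wise potential differences into the one-parameter family $\potential'+\ones\eta$.
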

\begin{proof}
  The lemma follows from Theorem~7.1 of~\textcite{humpola7} and the
  fact that for every arc~$\arc$ with $\expVar_{\arc} = 0$, the
  corresponding flows are zero.
\end{proof}

We now characterize robust feasibility of given expansion decisions~$x
\in X$
using
Problems~\eqref{eq:maximum-potential-difference}--\eqref{eq:maximum-arc-flow}
for the case of a weakly connected expanded graph.

\begin{theorem}
  \label{theorem:characterization-single-connected-component}
  Let $\expVar \in X$ be fixed and
  let~$\graph'(\expVar)=(\nodes, \rev{\arcs(x)})$ be the expanded
  graph.
  Further, we assume that $\graph'(\expVar)$ is weakly~connected.
  Then, Constraints~\eqref{eq:expansion:quantifiers-robust}--%
  \eqref{eq:expansion:potential-bounds}
  are satisfied \wrt~$\expVar$ if and only if for every pair of
  nodes~$(\node, \otherNode) \in \nodes^{2}$ \rev{ with $\node \neq
    \otherNode$}, the corresponding maximum
  potential difference satisfies the potential bounds
  \begin{align}
    \label{ineq:maxPot-satisfies-bounds}
    \maxPotDiff_{\node, \otherNode}(\expVar)
    \leq \ubPot_{\node} - \lbPot_{\otherNode}
  \end{align}
  and for each arc~$\arc \in \arcs'(\expVar)$, the minimum and maximum
  arc flow satisfies the corresponding flow bounds, \ie,
  \begin{equation} \label{ineq:minimum-maximum-flow-satisfies-bounds}
    \minArcFlow_\arc(\expVar) \geq \lbMassflow_\arc
    \quad \text{ and } \quad
    \maxArcFlow_{\arc}(\expVar) \leq \ubMassflow_{\arc}.
  \end{equation}
\end{theorem}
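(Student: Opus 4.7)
The plan is to prove both directions of the biconditional by leveraging Lemma~\ref{lemma:uniquness-of-flows-passive-case} on the expanded graph~$\graph'(\expVar)$. The lemma's uniqueness of flows and uniqueness of potentials up to an additive constant will make the objective values defined by the adversarial problems coincide, scenario by scenario, with the quantities appearing in the robust feasibility system, which bridges both directions.

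For \emph{necessity}, I would fix an arbitrary pair~$(\node, \otherNode) \in \nodes^{2}$ with~$\node \neq \otherNode$ and an optimal scenario~$\loadFlowVec^{\ast}$ of problem~\eqref{eq:maximum-potential-difference}. By the assumed robust feasibility, there is a tuple~$(\massflow^{\loadFlowVec^{\ast}}, \potential^{\loadFlowVec^{\ast}})$ satisfying~\eqref{eq:expansion:massflow-conservation}--\eqref{eq:expansion:potential-bounds}. By Lemma~\ref{lemma:uniquness-of-flows-passive-case}, every feasible potential for the scenario~$\loadFlowVec^{\ast}$ differs from~$\potential^{\loadFlowVec^{\ast}}$ only by a global additive constant, so the difference~$\potential_{\node} - \potential_{\otherNode}$ is the same in every feasible solution. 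Hence $\maxPotDiff_{\node, \otherNode}(\expVar) = \potential^{\loadFlowVec^{\ast}}_{\node} - \potential^{\loadFlowVec^{\ast}}_{\otherNode} \leq \ubPot_{\node} - \lbPot_{\otherNode}$ by~\eqref{eq:expansion:potential-bounds}. The same argument, now using uniqueness of flows, establishes both inequalities in~\eqref{ineq:minimum-maximum-flow-satisfies-bounds} for each~$\arc \in \arcs(\expVar)$.

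For \emph{sufficiency}, I would fix an arbitrary~$\loadFlowVec \in \uncertaintySet$ and invoke Lemma~\ref{lemma:uniquness-of-flows-passive-case} to obtain unique flows~$\massflow$ and a potential~$\potential'$ such that $(\massflow, \potential' + \ones \eta)$ satisfies~\eqref{eq:expansion:massflow-conservation}--\eqref{ineq:expansion:potential-coupling-candidateArcs-lBound} for every~$\eta \in \reals$. The flow bounds hold automatically: on built arcs, the definitions of~$\minArcFlow_{\arc}(\expVar)$ and~$\maxArcFlow_{\arc}(\expVar)$ combined with~\eqref{ineq:minimum-maximum-flow-satisfies-bounds} sandwich~$\massflow_{\arc}$ between~$\lbMassflow_{\arc}$ and~$\ubMassflow_{\arc}$, while on arcs with~$\expVar_{\arc}=0$ the flow is zero. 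It then remains to pick~$\eta$ that makes the potential bounds~\eqref{eq:expansion:potential-bounds} feasible; this requires $\max_{\node \in \nodes}(\lbPot_{\node} - \potential'_{\node}) \leq \min_{\node \in \nodes}(\ubPot_{\node} - \potential'_{\node})$, or equivalently $\potential'_{\otherNode} - \potential'_{\node} \leq \ubPot_{\otherNode} - \lbPot_{\node}$ for all $\node, \otherNode \in \nodes$. The case~$\node = \otherNode$ is trivial, and for~$\node \neq \otherNode$ the chain $\potential'_{\otherNode} - \potential'_{\node} \leq \maxPotDiff_{\otherNode, \node}(\expVar) \leq \ubPot_{\otherNode} - \lbPot_{\node}$ follows from the definition of the adversarial problem and assumption~\eqref{ineq:maxPot-satisfies-bounds}.

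The main technical point I expect to be delicate is the reduction to the graph~$\graph'(\expVar)$: one must argue that Lemma~\ref{lemma:uniquness-of-flows-passive-case} is applicable to every $\loadFlowVec \in \uncertaintySet$ on~$\graph'(\expVar)$ (which requires its weak connectedness, guaranteed by the hypothesis) and that the big-$M$ coupling constraints~\eqref{ineq:expansion:potential-coupling-candidateArcs-uBound}--\eqref{ineq:expansion:potential-coupling-candidateArcs-lBound} on candidate arcs with~$\expVar_{\arc}=0$ become redundant, so the arguments on~$\graph'(\expVar)$ lift back to the full graph~$\graph$ without introducing additional restrictions on the potentials or the flows.
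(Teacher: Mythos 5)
Your proposal is correct and follows essentially the same route as the paper: both directions rest on Lemma~\ref{lemma:uniquness-of-flows-passive-case}, using uniqueness of flows and shift-invariance of potential differences to identify the adversarial optimal values with the quantities in the robust system. The only cosmetic difference is that you argue necessity directly and choose the shift~$\eta$ via the interval-intersection condition $\max_{\node}(\lbPot_{\node}-\potential'_{\node}) \leq \min_{\node}(\ubPot_{\node}-\potential'_{\node})$, whereas the paper argues both by contradiction after normalizing the shift so that some node attains its upper potential bound---the substance is identical, including your correct observation that unbuilt arcs and their big-$M$ constraints can be removed, as the paper assumes implicitly before stating the adversarial problems.
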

\begin{proof}
  For a fixed expansion~$\expVar \in X$, let
  Constraints~\eqref{eq:expansion:quantifiers-robust}--%
  \eqref{eq:expansion:potential-bounds} be satisfied.
  We now distinguish two cases.
  First, we assume for the sake of contradiction that an arc~$\arc
  \in \arcs(\expVar)$ exists such that $\minArcFlow_{\arc}(\expVar) <
  \lbMassflow_{\arc}$ holds.
  Let~$(\loadFlowVec, \massflow, \potential)$ be a corresponding
  optimal solution of~\eqref{eq:minimum-arc-flow}.
  Applying
  Lemma~\ref{lemma:uniquness-of-flows-passive-case} to
  \loadscenario~$\loadFlowVec$ shows that there
  are unique flows~$\massflow$ satisfying
  Constraints~\eqref{eq:expansion:massflow-conservation}--%
  \eqref{ineq:expansion:potential-coupling-candidateArcs-lBound}
  and $\massflow_{\arc} = 0$ for each arc \rev{that is not built, \ie,
  $\arc \in \bar{\arcs}(x)$}.
  Due to the feasibility of
  Constraints~\eqref{eq:expansion:quantifiers-robust}--%
  \eqref{eq:expansion:potential-bounds}
  \wrt~$\expVar$, these flows satisfy
  Constraints~\eqref{eq:expansion:massflow-bounds-exArcs}
  and~\eqref{eq:expansion:massflow-bounds-candidateArcs}.
  This contradicts the assumption~$\minArcFlow_{\arc}(\expVar) <
  \lbMassflow_{\arc}$.
  Thus, $\minArcFlow_\arc(\expVar) \geq \lbMassflow_\arc$ is true
  for each $\arc \in \arcs(\expVar)$.
  The case of the upper flow bound can be handled analogously.
  Consequently,
  Conditions~\eqref{ineq:minimum-maximum-flow-satisfies-bounds} hold.

  Second, we now assume for the sake of contradiction that there is a
  pair of nodes~\mbox{$(\node, \otherNode) \in \nodes^{2}$} such that
  $\maxPotDiff_{\node,\otherNode}(\expVar) > \ubPot_{\node} -
  \lbPot_{\otherNode}$.
  Let~$(\loadFlowVec, \massflow, \potential)$ be a corresponding
  optimal solution of~\eqref{eq:maximum-potential-difference}.
  \rev{Due to the feasibility of
  Constraints~\eqref{eq:expansion:quantifiers-robust}--%
  \eqref{eq:expansion:potential-bounds} \wrt~$\expVar$, for load
  scenario~$\loadFlowVec$ there exist
  flows and potentials satisfying these constraints.
  Moreover, the corresponding flows are unique due to Lemma~1.
  Consequently, there is a point~$(\loadFlowVec, \massflow,
  \potential')$ that is feasible for
  Constraints~\eqref{eq:expansion:massflow-conservation}--%
  \eqref{eq:expansion:potential-bounds}
  and that satisfies $\massflow_{\arc} = 0$ for each arc \rev{not
    built, \ie, $\arc \in \bar{\arcs}(x)$}.
}
  The potential bounds~\eqref{eq:expansion:potential-bounds}
  imply~$\potential'_{\node} - \potential'_{\otherNode} \leq
  \ubPot_{\node} - \lbPot_{\otherNode}.$
  From Lemma~\ref{lemma:uniquness-of-flows-passive-case}, it follows
  that there is an $\eta \in \reals$ so that $\potential' + \ones\eta = \potential$ holds.
  Consequently, we obtain the contradiction
  \begin{equation*}
    \ubPot_{\node} - \lbPot_{\otherNode} \geq \potential'_{\node} -
    \potential'_{\otherNode} =
    \potential'_{\node} + \eta - (\potential'_{\otherNode} + \eta) =
    \potential_{\node} - \potential_{\otherNode}
    = \maxPotDiff_{\node,\otherNode}(\expVar).
  \end{equation*}

  We now examine the reverse direction.
  Thus, for fixed
  expansion~$\expVar \in X$,
  Conditions~\eqref{ineq:maxPot-satisfies-bounds}
  and~\eqref{ineq:minimum-maximum-flow-satisfies-bounds} are
  satisfied.
  Let~$\loadFlowVec \in \uncertaintySet$ be an arbitrary
  \loadscenario.
  Due to Lemma~\ref{lemma:uniquness-of-flows-passive-case},
  there is a feasible point~$(\loadFlowVec, \massflow, \potential)$
  that satisfies
  Constraints~\eqref{eq:expansion:massflow-conservation}--%
  \eqref{ineq:expansion:potential-coupling-candidateArcs-lBound}
  and $\massflow_{\arc} = 0$ for each arc \rev{not built, \ie, $\arc
    \in \bar{\arcs}(x)$}.
  In addition, Lemma~\ref{lemma:uniquness-of-flows-passive-case}
  implies that we can shift the potentials so that
  $\potential_{\node} \leq \ubPot_{\node}$ for every node~$\node \in
  \nodes$ holds and there is a
  node~$w$ with $\potential_{w} = \ubPot_{w}$.
  This point~$(\loadFlowVec, \massflow, \potential)$ is feasible for
  Problems~\eqref{eq:minimum-arc-flow}
  and~\eqref{eq:maximum-arc-flow} since no arc flow or potential
  bounds are present in these problems.
  Consequently, from
  Condition~\eqref{ineq:minimum-maximum-flow-satisfies-bounds}, it
  follows that the flow
  bounds~\eqref{eq:expansion:massflow-bounds-exArcs}
  and~\eqref{eq:expansion:massflow-bounds-candidateArcs} are
  satisfied.

  We now assume for the sake of contradiction that there is a node~$h
  \in \nodes$ with $\potential_{h} < \lbPot_{h}$.
  Then, it follows
  \begin{equation*}
    \potential_{w} - \potential_{h} = \ubPot_{w} - \potential_{h} >
    \ubPot_{w} - \lbPot_{h},
  \end{equation*}
  which is a contradiction to
  Condition~\eqref{ineq:maxPot-satisfies-bounds} since $(\loadFlowVec,
  \massflow, \potential)$ is a feasible point for
  Problem~\eqref{eq:maximum-potential-difference} \wrt\ the
  pair of nodes~$(w,h)$.
  Consequently, for every
  node~$\node \in \nodes$, \mbox{$\potential_{\node} \geq \lbPot_{\node}$} is
  satisfied.
  We note that the potentials satisfy the upper potential bounds
  due to the specific choice of the considered point~$(\loadFlowVec,
  \massflow, \potential)$.
  Hence, Constraints~\eqref{eq:expansion:potential-bounds} are also
  satisfied and the point~$(\loadFlowVec,
  \massflow, \potential)$ is feasible for
  Constraints~\eqref{eq:expansion:massflow-conservation}--%
  \eqref{eq:expansion:potential-bounds}.
  Since $\loadFlowVec$ is an arbitrary \loadscenario
  in~$\uncertaintySet$, this concludes the proof.
\end{proof}

We now extend the obtained characterization of robust feasibility to
the case that the expansion decision~$x \in X$ leads to an expanded
graph~$\graph'(\expVar)$ that has at least two connected components.
To this end, for
a given connected component~$\connectedComponent^{i}=(\nodes^{i},
\arcs^{i})\footnote{For the ease
    of presentation, we write $\connectedComponent^{i}=(\nodes^{i},
    \arcs^{i})$ instead of
    $\connectedComponent^{i}(\expVar)=(\nodes^{i}(\expVar),
    \arcs^{i}(\expVar))$ in the following.}$, we consider another
  auxiliary problem for computing the
maximal absolute flow that has to be transported between
the connected component~$\connectedComponent^{i}$ and the remaining
network in the uncertainty set.
This problem reads
\begin{equation}
  \label{eq:maximal-absolute-loadflow-connectedComponents}
  \maxAbsFlow_{\connectedComponent^{i}}(\expVar) \define \max_{\loadFlowVec}
  \abs{y}  \quad \st \quad y = \sum_{\node \in \nodes^{i} \cap
  \entries} \loadFlowVec_{\node} \rev{+}  \sum_{\node \in \nodes^{i} \cap
  \exits} \loadFlowVec_{\node}, \ \loadFlowVec \in \uncertaintySet.
\end{equation}
The value~$\maxAbsFlow_{\connectedComponent^{i}}(\expVar)$ is positive
if and only if there is a \loadscenario~$\loadFlowVec \in
\uncertaintySet$
with excess demand or excess supply regarding the connected
component~$\graph^{i}$.
In this case, $\maxAbsFlow_{\connectedComponent^{i}}(\expVar) > 0$,
the expansion decision~$x \in X$ is robust infeasible.

\begin{lemma}
  \label{lemma:necessary-condition-connected-components}
  Let $\expVar \in X$ be fixed and $\graph'(\expVar)=(\nodes,
  \rev{\arcs(x)})$ be the expanded graph.
  Furthermore, let~$\mathcal{\graph'}(\expVar)
  \define \set{\connectedComponent^{1}, \ldots,
    \connectedComponent^{n}}$ with
  $\connectedComponent^{i}=(\nodes^{i}, \arcs^{i})$ be the set
  of
  connected components of the expanded graph~$\graph'(\expVar)$.
  Then, Constraints~\eqref{eq:expansion:quantifiers-robust}--%
  \eqref{eq:expansion:potential-bounds} can only be satisfied
  \wrt~$\expVar$ if for every connected
  component~$\connectedComponent^{i}$ with $i \in \set{1,\ldots,n}$ of
  the expanded
  network~$\graph'(\expVar)$, the maximum excess demand or excess supply is
  zero, \ie, $\maxAbsFlow_{\connectedComponent^{i}}(\expVar) = 0$.
\end{lemma}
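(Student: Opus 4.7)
The plan is to proceed by contraposition. Assume there exists a connected component $\connectedComponent^{i} = (\nodes^{i}, \arcs^{i})$ of $\graph'(\expVar)$ with $\maxAbsFlow_{\connectedComponent^{i}}(\expVar) > 0$. By definition of Problem~\eqref{eq:maximal-absolute-loadflow-connectedComponents}, there is a \loadscenario $\loadFlowVec \in \uncertaintySet$ together with $y = \sum_{\node \in \nodes^{i} \cap \entries} \loadFlowVec_{\node} + \sum_{\node \in \nodes^{i} \cap \exits} \loadFlowVec_{\node}$ such that $\abs{y} > 0$, \ie, $y \neq 0$. The goal is to show that this specific $\loadFlowVec$ admits no feasible second-stage decision $(\massflow^{\loadFlowVec}, \potential^{\loadFlowVec})$ satisfying Constraints~\eqref{eq:expansion:massflow-conservation}--\eqref{eq:expansion:potential-bounds}, which contradicts feasibility of~\eqref{eq:expansion:quantifiers-robust}--\eqref{eq:expansion:potential-bounds} \wrt~$\expVar$.

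The key step is a standard conservation argument on the connected component. Suppose for contradiction that such flows $\massflow^{\loadFlowVec}$ do exist. I would sum the mass flow conservation equation~\eqref{eq:expansion:massflow-conservation} over all nodes $\node \in \nodes^{i}$. Since $\connectedComponent^{i}$ is a connected component of the expanded graph $\graph'(\expVar) = (\nodes, \arcs(\expVar))$, every arc $\arc \in \arcs(\expVar)$ incident to some node in $\nodes^{i}$ has both endpoints in $\nodes^{i}$, so each such arc flow appears exactly once with a positive and once with a negative sign in the sum and therefore cancels. Hence the left-hand side telescopes to zero, while the right-hand side becomes $\sum_{\node \in \nodes^{i}} \loadFlowVec_{\node}$. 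Using $\loadFlowVec_{\node} = 0$ for $\node \in \innodes$ (which holds since $\loadFlowVec \in \uncertaintySet$), this right-hand side equals exactly $y$, yielding $y = 0$ and contradicting $\abs{y} > 0$.

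Putting these two parts together establishes the claim. The only subtlety — and essentially the sole technical step — is to verify that no arc of $\arcs(\expVar)$ crosses between $\nodes^{i}$ and $\nodes \setminus \nodes^{i}$, but this is immediate from the definition of a connected component of the expanded graph. I do not expect any serious obstacle beyond making sure the partition of $\nodes$ into $\entries$, $\exits$, and $\innodes$ is handled correctly when rewriting $\sum_{\node \in \nodes^{i}} \loadFlowVec_{\node}$ in terms of the sums over $\nodes^{i} \cap \entries$ and $\nodes^{i} \cap \exits$ appearing in the definition of $\maxAbsFlow_{\connectedComponent^{i}}(\expVar)$.
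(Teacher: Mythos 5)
Your proposal is correct and takes essentially the same route as the paper, whose proof is just a terser version of your argument: a positive $\maxAbsFlow_{\connectedComponent^{i}}(\expVar)$ yields a scenario with excess demand or supply in $\connectedComponent^{i}$, which then cannot satisfy mass flow conservation~\eqref{eq:expansion:massflow-conservation} on that component. Your telescoping sum makes this explicit; the only point worth adding is that flows on \emph{unbuilt} candidate arcs crossing the component boundary also appear in~\eqref{eq:expansion:massflow-conservation}, but they vanish by~\eqref{eq:expansion:massflow-bounds-candidateArcs}, in line with the paper's stated convention of removing such arcs for a fixed expansion decision.
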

\begin{proof}
  If there is a connected component~$\connectedComponent^{i}$ with $i
  \in \set{1,\ldots,n}$ so
  that~$\maxAbsFlow_{\connectedComponent^{i}}(\expVar) > 0$ holds,
  then
  there is a scenario~$\loadFlowVec \in \uncertaintySet$ such that
  there is excess demand or excess supply in~$\connectedComponent^{i}$.
  Consequently, this scenario cannot be transported through the network
  since mass flow conservation~\eqref{eq:expansion:massflow-conservation}
  cannot be satisfied in the connected component~$\connectedComponent^{i}$.
\end{proof}
We note that if the graph consists only of a single connected
component, then it directly follows that the optimal objective value
of
Problem~(\ref{eq:maximal-absolute-loadflow-connectedComponents})
is zero because we only consider balanced \rev{load scenarios} in the
uncertainty set~(\ref{eq:demand-uncertainty-set}).
Using the previous lemma, we now extend
Theorem~\ref{theorem:characterization-single-connected-component} to
the case of multiple connected components in the expanded graph.

\begin{theorem}
  \label{theorem:characterization-multiple-connected-components}
  Let $\expVar \in X$ be fixed and $\graph'(\expVar)=(\nodes, \rev{\arcs(x)})$ be the
  expanded graph. Furthermore, let~$\mathcal{\graph'}(\expVar)
  \define \set{\connectedComponent^{1}, \ldots,
    \connectedComponent^{n}}$ with
  $\connectedComponent^{i}=(\nodes^{i}, \arcs^{i})$ be the set of
  connected components of the expanded graph~$\graph'(\expVar)$.
  Then, Constraints~\mbox{\eqref{eq:expansion:quantifiers-robust}--%
    \eqref{eq:expansion:potential-bounds}}
  are satisfied \wrt~$\expVar$ if and only if
  \begin{subequations} \label{characterization-passive-case-multiple-components}
  \begin{align}
        \maxAbsFlow_{\connectedComponent^{i}}(\expVar) = 0 \quad
    & \forAll \connectedComponent^{i} \in \mathcal{\graph'}(\expVar),
    \label{eq:maximal-absolute-loadflow-multiple-connected-components}
    \\
    \maxPotDiff_{\node, \otherNode}(\expVar) \leq \ubPot_{\node} -
    \lbPot_{\otherNode}
    \quad & \forAll (\node, \otherNode) \in (\nodes^{i})^{2} \text{ \rev{ with $\node \neq
    \otherNode$}}, \
            \connectedComponent^{i} \in \mathcal{\graph'}(\expVar),
            \label{ineq:maxPot-satisfies-bounds-multiple-connected-components}
            \\
    \minArcFlow_\arc(\expVar) \geq \lbMassflow_\arc
    \quad
    & \forAll \arc \in \arcs^{i}, \ \connectedComponent^{i} \in \mathcal{\graph'}(\expVar),
      \label{ineq:minimum-flow-satisfies-bounds-multiple-connected-components}
    \\
    \maxArcFlow_{\arc}(\expVar) \leq \ubMassflow_{\arc}
    \quad
    & \forAll \arc \in \arcs^{i}, \ \connectedComponent^{i} \in \mathcal{\graph'}(\expVar),
            \label{ineq:maximum-flow-satisfies-bounds-multiple-connected-components}
  \end{align}
  \end{subequations}
  holds.
\end{theorem}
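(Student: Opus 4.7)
The plan is to reduce Theorem~\ref{theorem:characterization-multiple-connected-components} to the weakly connected case handled by Theorem~\ref{theorem:characterization-single-connected-component}, using Lemma~\ref{lemma:necessary-condition-connected-components} to bridge the two. I would organize the argument into the two directions of the equivalence.

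For the ``only if'' direction, assume Constraints~\eqref{eq:expansion:quantifiers-robust}--\eqref{eq:expansion:potential-bounds} hold \wrt~$\expVar$. Condition~\eqref{eq:maximal-absolute-loadflow-multiple-connected-components} is then immediate from Lemma~\ref{lemma:necessary-condition-connected-components}. For Conditions~\eqref{ineq:maxPot-satisfies-bounds-multiple-connected-components}--\eqref{ineq:maximum-flow-satisfies-bounds-multiple-connected-components}, I would apply the necessity direction of Theorem~\ref{theorem:characterization-single-connected-component} componentwise: since the pairs $(\node,\otherNode)$ and arcs $\arc$ considered in these conditions both lie within a single component $\connectedComponent^{i}$, the feasibility of the original adjustable constraints on the whole graph implies feasibility when restricted to $\connectedComponent^{i}$ (for each scenario $\loadFlowVec \in U$, which is balanced on $\connectedComponent^{i}$ by~\eqref{eq:maximal-absolute-loadflow-multiple-connected-components}), and hence the required bounds on $\maxPotDiff$, $\minArcFlow$, $\maxArcFlow$ follow.

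For the ``if'' direction, I would pick an arbitrary $\loadFlowVec \in \uncertaintySet$ and use~\eqref{eq:maximal-absolute-loadflow-multiple-connected-components} to observe that the restriction $\loadFlowVec|_{\nodes^{i}}$ of $\loadFlowVec$ to each connected component $\connectedComponent^{i}$ is balanced. Since each $\connectedComponent^{i}$ is weakly connected by construction, I can apply the sufficiency direction of Theorem~\ref{theorem:characterization-single-connected-component} \emph{independently} to each component to obtain feasible flows $\massflow^{i}$ and potentials $\potential^{i}$ on $\connectedComponent^{i}$ satisfying Constraints~\eqref{eq:expansion:massflow-conservation}--\eqref{eq:expansion:potential-bounds} restricted to that component. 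Concatenating these componentwise solutions yields a point $(\massflow,\potential)$ that is feasible on the whole expanded graph, since there are no arcs between distinct components and the restricted constraints decouple across components.

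The main subtlety to address carefully is the componentwise application of Theorem~\ref{theorem:characterization-single-connected-component}: I need to verify that the adversarial problems~\eqref{eq:maximum-potential-difference}--\eqref{eq:maximum-arc-flow}, when restricted to the subgraph $\connectedComponent^{i}$, are consistent with those formulated on the full expanded graph, so that the bounds assumed in~\eqref{ineq:maxPot-satisfies-bounds-multiple-connected-components}--\eqref{ineq:maximum-flow-satisfies-bounds-multiple-connected-components} transfer. This amounts to the observation that, under condition~\eqref{eq:maximal-absolute-loadflow-multiple-connected-components}, every scenario $\loadFlowVec \in U$ induces a balanced load on each component, so feasibility of~\eqref{eq:expansion:massflow-conservation} on the whole graph (with zero flow across components) decouples into feasibility on each $\connectedComponent^{i}$, making $\maxPotDiff_{\node,\otherNode}(\expVar)$ for $(\node,\otherNode) \in (\nodes^{i})^{2}$ and $\minArcFlow_{\arc}(\expVar), \maxArcFlow_{\arc}(\expVar)$ for $\arc \in \arcs^{i}$ determined solely by the loads within $\connectedComponent^{i}$. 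Once this decoupling is explicit, the remainder is essentially a reduction to the single-component theorem and Lemma~\ref{lemma:necessary-condition-connected-components}, with no further technical difficulty.
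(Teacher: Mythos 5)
Your proposal is correct and follows essentially the same route as the paper: Lemma~\ref{lemma:necessary-condition-connected-components} yields condition~\eqref{eq:maximal-absolute-loadflow-multiple-connected-components}, after which both directions reduce componentwise to Theorem~\ref{theorem:characterization-single-connected-component} with the uncertainty set projected onto each $\nodes^{i}$, using exactly the decoupling you flag (balancedness per component plus absence of built arcs between components). The only cosmetic difference is that the paper handles the sufficiency direction by applying Lemma~\ref{lemma:uniquness-of-flows-passive-case} per component (obtaining per-component potential shifts~$\eta_{i}$) and then re-running the second part of the proof of Theorem~\ref{theorem:characterization-single-connected-component}, whereas you invoke that theorem's statement as a black box per component and concatenate --- an equivalent packaging of the same argument.
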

\begin{proof}
  For a given expansion~$x \in X$, let the
  Constraints~\eqref{eq:expansion:quantifiers-robust}--%
  \eqref{eq:expansion:potential-bounds} be satisfied.
  Then, from
  Lemma~\ref{lemma:necessary-condition-connected-components}, it
  follows that
  Condition~\eqref{eq:maximal-absolute-loadflow-multiple-connected-components}
  holds.
  Hence, every $\loadFlowVec \in \uncertaintySet$ is balanced
  \wrt\ each connected component, \ie,
  $\sum_{\node \in \entries \cap \nodes^{i}} \loadFlowVec_{\node} \rev{+}
  \sum_{\node \in \exits \cap \nodes^{i}} \loadFlowVec_{\node} \rev{=0}$ for
  each $i \in \set{1,\ldots,n}$.
  Consequently, we can apply
  Theorem~\ref{theorem:characterization-single-connected-component} to
  each connected component~$\connectedComponent^{i}$ while using
  as uncertainty set the original uncertainty set projected onto
  the nodes~$\nodes^{i}$ of the connected component.
  This proves that
  Conditions~\eqref{characterization-passive-case-multiple-components}
  are satisfied.

  For fixed expansion~$\expVar \in X$, we now assume that
  Conditions~\eqref{characterization-passive-case-multiple-components}
  hold.
  Since
  Conditions~\eqref{eq:maximal-absolute-loadflow-multiple-connected-components}
  are satisfied, every $\loadFlowVec \in \uncertaintySet$ is balanced
  \wrt\ each connected component.
  Consequently, for each connected component, we can apply
  Lemma~\ref{lemma:uniquness-of-flows-passive-case}.
  Thus, for each \loadscenario~$\loadFlowVec \in U$, there are
  potentials~$\potential'$ and unique flows~$\massflow$ such
  that the set of feasible points satisfying
  Constraints~\eqref{eq:expansion:massflow-conservation}--\eqref{ineq:expansion:potential-coupling-candidateArcs-lBound}
  \wrt~$\expVar$ and
  $\massflow_{\arc} = 0$ for each arc \rev{not built, \ie, $\arc \in \bar{\arcs}(x)$,}
  is non-empty and given by
  \begin{equation*}
    \Defset{(q, \potential)}{(\potential_{\node})_{\node \in
        \nodes^{i}} = (\potential'_{\node} + \eta_{i})_{\node \in
        \nodes^{i}}, \ \eta_{i} \in \reals, \ i \in \abs{\mathcal{\graph'}(\expVar)}}.
  \end{equation*}
  Using this statement, we can apply the second part of the proof of
  Theorem~\ref{theorem:characterization-single-connected-component} to
  every connected component~$G^{i}$, which proves the claim.
\end{proof}

For a given expansion decision,
Theorem~\ref{theorem:characterization-multiple-connected-components}
allows to verify robust feasibility by solving at most
$\abs{\nodes} + \abs{\nodes}^{2} + 2\abs{\arcs}$ many nonlinear
optimization problems.
Furthermore, in case of robust infeasibility of the expansion
decision, solving these problems provides violating scenarios in~$U$
that render the expansion decision infeasible.
Consequently,
Theorem~\ref{theorem:characterization-multiple-connected-components}
resolves the main challenges~\ref{adversarial:robust-feasibility}
and~\ref{adversarial:computing-infeasible-scn} when applying the  adversarial approach to
the considered adjustable robust mixed-integer nonlinear optimization
problem~(\ref{expansion}).
We note that for checking robust feasibility, we have to solve the
nonconvex adversarial problems to global optimality.
In doing so, also the specific choice of the potential-based flow
model as well as the choice of the uncertainty set influence the
computational complexity of this task.
However, in the conducted computational study the adversarial problems
are solved rather fast and the
MINLPs~\eqref{expansion} pose a much
bigger computational challenge.

\begin{remark}
  \label{remark:bilevel-passive-network}
  \rev{Let us use the adversarial
    problem~\eqref{eq:maximum-potential-difference} to illustrate that
    the adversarial problems are equivalent to specifically chosen
    bilevel optimization problems.
    More precisely, the set of solutions to the single-level
    problem~\eqref{eq:maximum-potential-difference} is the same as the
    one for the bilevel problem
    \begin{align}
      \label{eq:adversarial-bilevel-problem}
      \max_{\loadFlowVec \in \uncertaintySet} \, \min_{\massflow, \potential}
      \quad \potential_{\node} -
      \potential_{\otherNode} \quad \st \quad
      \eqref{eq:expansion:massflow-conservation}\text{--}%
      \eqref{ineq:expansion:potential-coupling-candidateArcs-lBound}.
    \end{align}
    The intuition behind this bilevel problem is that the upper-level
    player chooses the worst-case load scenario~$\loadFlowVec$ in the uncertainty
    set~$\uncertaintySet$ with the goal to maximize the potential
    difference between the nodes~$u$ and~$v$.
    In contrast to this, the lower-level player, \eg, the transmission
    system operator, tries to minimize this potential difference.
    If the upper-level player finds a load scenario~$d$ for which the
    lower-level player cannot operate the network such that the potential
    difference stays
    within the potential bounds, \ie, $\potential_{\node}
    -\potential_{\otherNode} \leq \ubPot_\node -\lbPot_\otherNode$, then
    the current network design is robust infeasible.
    The equivalence of the single-level adversarial
    problem~\eqref{eq:maximum-potential-difference} and the bilevel
    problem~\eqref{eq:adversarial-bilevel-problem} follows from
    Lemma~\ref{lemma:uniquness-of-flows-passive-case} because, for
    a given load scenario~$\loadFlowVec$, the flows and potential differences
    are uniquely determined.
    Consequently, for a given upper-level decision, the lower-level
    variables are determined
    by~\eqref{eq:expansion:massflow-conservation}--%
    \eqref{ineq:expansion:potential-coupling-candidateArcs-lBound}
    up to a constant shift,
    \ie, they are predetermined by physics.
    This enables us to merge the upper- and lower-level problem
    of~\eqref{eq:adversarial-bilevel-problem}, which then leads to the
    equivalent single-level
    reformulation~\eqref{eq:maximum-potential-difference}.}

  \rev{We emphasize that Problem~\eqref{eq:adversarial-bilevel-problem} has a
    nonconvex lower-level problem and, thus, no duality-based single-level
    reformulations can be directly applied.
    Consequently, the derived adversarial
    problem~\eqref{eq:maximum-potential-difference} is a single-level
    reformulation of the challenging nonconvex bilevel
    problem~\eqref{eq:adversarial-bilevel-problem} that can only be
    derived by exploiting structural properties of potential-based flows.}
\end{remark}

\begin{remark} \label{remark:active-elements}
  \rev{Potential networks that are used to transport fluids over long
    distances often contain controllable elements, \eg, compressors and
    control valves in gas networks or pumps in water networks.
    These elements enable the system operator to actively
    decrease or increase potential levels.
    For representing such controllable elements, various models, ranging
    from linear ones, see, \eg,~\textcite{Sanches_et_al:2016, Plein2021}, to
    sophisticated nonlinear ones, see, \eg,
    \textcite{DAmbrosio_et_al:2015}, exist.
    However, even in case of simplified linear models for these
    elements, the characterization of
    Theorem~\ref{theorem:characterization-multiple-connected-components}
    does not hold.
    This directly follows from the discussion and the example of Section~3
    in~\textcite{Plein2021}, in which a booking can be interpreted as
    a specific box uncertainty set~$U$.
  It also becomes clear when considering the bilevel
  formulation~\eqref{eq:adversarial-bilevel-problem} of the
  adversarial problem.
  Including controllable elements leads to new variables and
  constraints in the lower level of the bilevel
  problem~\eqref{eq:adversarial-bilevel-problem}.
  However, for a given load scenario~$\loadFlowVec$, there can now
  be different controls of the controllable elements that lead to
  different flows.
  Consequently, the uniqueness result of
  Lemma~\ref{lemma:uniquness-of-flows-passive-case} does not hold
  anymore.
  Thus, in case of controllable elements, computing an
  adversarial problem leads to solving a challenging bilevel
  problem with a nonconvex lower-level problem,
  which cannot be directly reformulated as a single-level
  optimization problem as in~\eqref{eq:maximum-potential-difference}.
}

\end{remark}

\begin{algorithm}[!ht]
  \caption{Adversarial approach to solve the network design problem~\eqref{expansion}}
  \label{alg:adversarial-approach}
  \DontPrintSemicolon

  \KwInput{A Graph~$\graph=(\nodes, \exArcs \cup \expArcs)$ and an uncertainty
    set~$\uncertaintySet$ satisfying~(\ref{eq:demand-uncertainty-set}).}

  \KwOutput{An optimal adjustable robust expansion~$\expVar \in X$ for
    Problem~\eqref{expansion} or an indication of infeasibility.}
  Determine a finite set of scenarios~$S \subseteq \uncertaintySet$.

  Solve Problem~\eqref{expansion} \wrt~$S$ (instead
  of~$\uncertaintySet$) to get $(\expVar,
  \massflow, \potential)$\label{alg:line:solve-master-problem}.
  \label{alg:line-solve-expansion-problem}

  \If{the problem is infeasible}
  {\Return The problem is infeasible.}
  \label{alg:line-begin-check-robust-feasibility}

  Determine the set of all connected
  components~$\mathcal{\graph'}(\expVar)$ of the expanded graph
  $\graph'(\expVar)=(\nodes, \rev{\arcs(x)})$.

  \For{$\connectedComponent^{i} \in \mathcal{\graph'}(\expVar)$}{
    Solve
    Problem~\eqref{eq:maximal-absolute-loadflow-connectedComponents}
    to get $\loadFlowVec'$ with objective
    value~$\maxAbsFlow_{\connectedComponent^{i}}(\expVar)$.

    \If{$\maxAbsFlow_{\connectedComponent^{i}}(\expVar) > 0$ }{
      $S = S \cup \{\loadFlowVec'\}$ and go to
      Line~\ref{alg:line:solve-master-problem}.}}

  Set $\varphi^{\max} = 0$.

  \For{$\connectedComponent^{i} \in \mathcal{\graph'}(\expVar)$}{
    \For{$(\node, \otherNode) \in (\nodes^{i})^{2}$ \rev{ with $\node \neq
    \otherNode$}}{
      Solve Problem~\eqref{eq:maximum-potential-difference}
      \wrt\ $\connectedComponent^{i}$ to get
      $(\loadFlowVec', \massflow', \potential')$ with objective
      value~$\maxPotDiff_{\node,\otherNode}(\expVar)$.

      \If{$\maxPotDiff_{\node,\otherNode}(\expVar) > \ubPot_\node -
        \lbPot_\otherNode$ and
        $\maxPotDiff_{\node,\otherNode}(\expVar) - (\ubPot_\node -
        \lbPot_\otherNode) > \varphi^{\max}$}{
        Set $\varphi^{\max} = \maxPotDiff_{\node,\otherNode}(\expVar)
        - (\ubPot_\node - \lbPot_\otherNode)$
        and $\loadFlowVec^{\max} = \loadFlowVec'$.}
    }
  }

  \If{$\varphi^{\max} > 0$}{
        $S = S \cup \{\loadFlowVec^{\max}\}$ and go to Line~\ref{alg:line:solve-master-problem}.}

  Set $\massflow^{\max} = 0$.

  \For{$\connectedComponent^{i} \in \mathcal{\graph'}(\expVar)$}{
    \For{$\arc \in \arcs^{i}$}{
      Solve Problem~\eqref{eq:minimum-arc-flow}
      \wrt\ $\connectedComponent^{i}$ to get
      $(\loadFlowVec',\massflow', \potential')$ with objective
      value~$\minArcFlow_{\arc}(\expVar)$.

      \If{$\minArcFlow_{\arc}(\expVar) < \lbMassflow_{\arc}$
      and $\lbMassflow_{\arc}- {\minArcFlow_{\arc}(\expVar) } > \massflow^{\max}$}{
         Set $\massflow^{\max} = \lbMassflow_{\arc}- {\minArcFlow_{\arc}(\expVar) }$
         and $\loadFlowVec^{\max} = \loadFlowVec'$.}

      Solve Problem~\eqref{eq:maximum-arc-flow}
      \wrt\ $\connectedComponent^{i}$ to get
      $(\loadFlowVec', \massflow', \potential')$ with objective
      value~$\maxArcFlow_{\arc}(\expVar)$.

      \If{$\maxArcFlow_{\arc}(\expVar) > \ubMassflow_{\arc}$
      and ${\maxArcFlow_{\arc}(\expVar) } - \ubMassflow_{\arc} > \massflow^{\max}$}{
        Set $\massflow^{\max} = {\maxArcFlow_{\arc}(\expVar) } - \ubMassflow_{\arc}$
        and $\loadFlowVec^{\max} = \loadFlowVec'$.}

    }
  }
  \If{$\massflow^{\max} > 0$}{
   $S = S \cup \{\loadFlowVec^{\max}\}$ and go to Line~\ref{alg:line:solve-master-problem}.}
  \Return Optimal adjustable robust network design~$\expVar \in X$.
\end{algorithm}%

Embedding the results of
Theorem~\ref{theorem:characterization-multiple-connected-components}
into the adversarial approach leads to
Algorithm~\ref{alg:adversarial-approach}.
We note that there are multiple possibilities on how to integrate the
characterization of robust feasibility of
Theorem~\ref{theorem:characterization-multiple-connected-components}
in an adversarial approach.
In our implementation of Algorithm~\ref{alg:adversarial-approach},
we aim to keep the size of the MINLP~\eqref{expansion} \wrt~$S$ as small as
possible since solving this MINLP is computationally challenging.
Since the size of this problem increases with the size of the
scenario set~$S$, for an infeasible expansion decision, we only add
a single violating scenario to cut off this robust infeasible point.
More precisely, we first solve the adversarial
problems~\eqref{eq:maximal-absolute-loadflow-connectedComponents}
since these problems are typically less challenging than
Problems~\eqref{eq:maximum-potential-difference}--\eqref{eq:maximum-arc-flow},
which
contain the constraints of the nonconvex potential-based flows.
If solving
Problems~\eqref{eq:maximal-absolute-loadflow-connectedComponents}
leads to violating scenarios, \ie,
$\maxAbsFlow_{\connectedComponent^{i}}(\expVar) > 0$ holds, then we
add this scenario to cut off the robust infeasible
expansion decision~$x$ and start a new iteration.
Otherwise, we solve the adversarial
problems~\eqref{eq:maximum-potential-difference},
respectively~\eqref{eq:minimum-arc-flow}
and~\eqref{eq:maximum-arc-flow},  and add
a most violating scenario to the set of \rev{load scenarios}~$S$ if
applicable.
In general, it is also possible to stop solving these adversarial
problems after a first violating scenario is computed as for the case
of Problems~\eqref{eq:maximal-absolute-loadflow-connectedComponents}.
However, preliminary computational results showed that adding a most
violating scenario \wrt~\eqref{eq:maximum-potential-difference} leads
to a lower number of iterations of the algorithm.
We finally note that all adversarial
problems~\eqref{eq:maximal-absolute-loadflow-connectedComponents} and
\eqref{eq:maximum-potential-difference}--\eqref{eq:maximum-arc-flow}
can also be solved in parallel since they do not depend on each other.

We conclude this section with a positive answer for the main
challenge~\ref{adversarial:finite-termination}.

\begin{theorem}
  \label{theorem:adversarial-approach-finite-termination}
  Algorithm~\ref{alg:adversarial-approach} terminates after a finite
  number of iterations and either returns an adjustable robust
  solution of Problem~\eqref{expansion} or proves its infeasibility.
\end{theorem}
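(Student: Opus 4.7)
The plan is to split the argument into two pieces: first, showing finite termination, and second, establishing correctness of whatever the algorithm returns.

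For finite termination, the key observation is that the set $X \subseteq \set{0,1}^{\expArcs}$ of admissible expansion decisions is a finite set. I will show that every iteration that does not terminate cuts off at least one element of $X$ for good, so the algorithm must halt after at most $\abs{X}$ executions of line~\ref{alg:line:solve-master-problem}. To this end, suppose the master MINLP returns an expansion~$\expVar$ which is not robust feasible. Then, by Theorem~\ref{theorem:characterization-multiple-connected-components}, at least one of the conditions in~\eqref{characterization-passive-case-multiple-components} fails, so at least one of the adversarial problems~\eqref{eq:maximal-absolute-loadflow-connectedComponents} and \eqref{eq:maximum-potential-difference}--\eqref{eq:maximum-arc-flow} yields a \loadscenario~$\loadFlowVec'\in \uncertaintySet$ which Algorithm~\ref{alg:adversarial-approach} appends to~$S$. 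For this $\loadFlowVec'$, I would argue that no tuple~$(\massflow^{\loadFlowVec'},\potential^{\loadFlowVec'})$ satisfies~\eqref{eq:expansion:massflow-conservation}--\eqref{eq:expansion:potential-bounds} with respect to~$\expVar$: in the case of a violating scenario from~\eqref{eq:maximal-absolute-loadflow-connectedComponents}, mass conservation~\eqref{eq:expansion:massflow-conservation} immediately fails on some connected component, while in the cases of~\eqref{eq:maximum-potential-difference}--\eqref{eq:maximum-arc-flow}, Lemma~\ref{lemma:uniquness-of-flows-passive-case} guarantees that the induced flows are unique and potential differences are unique up to shifts, so the detected violation of a flow or potential bound cannot be repaired. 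Hence $\expVar$ is infeasible for the master MINLP with respect to~$S\cup\set{\loadFlowVec'}$, and the same expansion decision will never be returned again in any subsequent iteration.

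For correctness, I consider the two possible termination points. If line~\ref{alg:line-begin-check-robust-feasibility} is triggered, then the master MINLP with respect to some $S\subseteq \uncertaintySet$ is infeasible; since enlarging the uncertainty set only adds constraints, the full Problem~\eqref{expansion} is then also infeasible, which justifies returning infeasibility. Otherwise, the algorithm returns an expansion~$\expVar$ for which every adversarial check passed, so all conditions of Theorem~\ref{theorem:characterization-multiple-connected-components} hold and $\expVar$ is robust feasible for Problem~\eqref{expansion}. Moreover, $\expVar$ is optimal for the master MINLP with respect to~$S$, whose optimal value is a lower bound on $\minCostsRobustExpansion(\uncertaintySet)$ since $S\subseteq \uncertaintySet$; as $\expVar$ attains this lower bound while being robust feasible, it is an adjustable robust optimal solution of Problem~\eqref{expansion}.

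The main obstacle, and the step I would spend the most care on, is the cutting-off argument in the finite-termination part: the adversarial problems~\eqref{eq:maximum-potential-difference}--\eqref{eq:maximum-arc-flow} intentionally drop the bound constraints~\eqref{eq:expansion:massflow-bounds-exArcs}--\eqref{eq:expansion:potential-bounds}, so a violating scenario is only meaningful once one invokes Lemma~\ref{lemma:uniquness-of-flows-passive-case} to conclude that the induced flows and potential differences cannot be repaired by any other feasible choice. Everything else then fits together cleanly, and the bound $\abs{X}<\infty$ closes the argument.
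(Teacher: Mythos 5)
Your proposal is correct and takes essentially the same route as the paper's own proof: finite termination follows from the finiteness of $X \subseteq \set{0,1}^{\expArcs}$ together with the fact that each added violating scenario permanently excludes the current robust infeasible expansion, and correctness follows from Problem~\eqref{expansion} \wrt~$S$ being a relaxation of Problem~\eqref{expansion} \wrt~$\uncertaintySet$. If anything, you are more explicit than the paper in the cutting-off step, where you invoke Lemma~\ref{lemma:uniquness-of-flows-passive-case} to argue that a violating scenario from the bound-free adversarial problems~\eqref{eq:maximum-potential-difference}--\eqref{eq:maximum-arc-flow} genuinely renders $\expVar$ infeasible for the augmented master problem---a detail the paper's proof leaves implicit behind ``the considered network expansion is excluded in the next iteration.''
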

\begin{proof}
  If we consider a robust infeasible
  expansion decision~$\expVar \in X$, \ie, it cannot be extended
  to satisfy Constraints~\eqref{eq:expansion:quantifiers-robust}--%
  \eqref{eq:expansion:potential-bounds} for
  all~$\loadFlowVec \in \uncertaintySet$,
  then there exists a \loadscenario~$\loadFlowVec \in \uncertaintySet$ that
  violates one of the conditions
  in~\eqref{characterization-passive-case-multiple-components}.
  Due to the construction of the algorithm, one of these violating
  \rev{load scenarios} is added to the set of scenarios~$S$ if~$x$ is
  part of the
  optimal solution in Line~\ref{alg:line:solve-master-problem}.
  Consequently, the considered network expansion~$\expVar \in X$ is excluded
  in the next iteration.
  Thus, the algorithm terminates after a finite number of iterations
  because we only have a finite number of possible assignments
  for~$\expVar \in X \subseteq \set{0,1}^{\abs{\expArcs}}$.
  Since Problem~\eqref{expansion} \wrt~$S$ is a relaxation of
  Problem~\eqref{expansion} \wrt~$\uncertaintySet$, the algorithm
  either correctly returns an optimal solution or correctly verifies
  infeasibility.
\end{proof}

\section{Enhanced Solution Techniques}
\label{sec:enhanced-solution-techniques}

When applying Algorithm~\ref{alg:adversarial-approach}, there are two
main challenges from the computational point of view.
For verifying robust feasibility, the adversarial
problems~(\ref{eq:maximum-potential-difference})--(\ref{eq:maximum-arc-flow})
have to be solved to global optimality.
In particular, solving~$\abs{\nodes}^{2}$ many
problems~(\ref{eq:maximum-potential-difference}) can be
computationally expensive.
In addition, solving the MINLP~(\ref{expansion}) \wrt\ the worst-case
scenarios~$S$ becomes more demanding from iteration to
iteration due to the increasing set of scenarios~$S$.
In the following, we present different techniques that address these
computational challenges.

\subsection{Reducing the Number of Adversarial Problems}

We now prove that under specific assumptions on the potential bounds,
we can significantly reduce the number of adversarial
problems~(\ref{eq:maximum-potential-difference})
that have to be solved to verify robust feasibility.
The intuition is based on the observation that
in the considered potential-based flow setting, there is always a
source node with maximal potential level and a sink node with minimal
potential level.
\begin{observation}
  \label{obs:max-pressure-entry-low-pressure-exit}
  Let $\expVar \in X$ be fixed and
  let~$\connectedComponent^{i}=(\nodes^{i}, \arcs^{i})$ be a connected
  component of the expanded graph~$\graph'(\expVar)=(\nodes,
  \rev{\arcs(x)})$.
  Further, let the point~$(\loadFlowVec, \massflow, \potential)$ satisfy
  \mbox{Constraints~\eqref{eq:expansion:massflow-conservation}--%
  \eqref{ineq:expansion:potential-coupling-candidateArcs-lBound}}
  \wrt~$\connectedComponent^{i}$.
  Then, there is a source node~$w \in \entries^{i} \define \entries
  \cap \nodes^{i}$ with $\potential_{w} = \max_{v \in \nodes^{i}}
  \potential_{v}$ and a sink node~$u \in \exits^{i} \define \exits
  \cap \nodes^{i}$ with $\potential_{u} = \min_{v \in \nodes^{i}}
  \potential_{v}$.
\end{observation}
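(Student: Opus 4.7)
The plan is to analyze the level set of the maximum potential and use the monotonicity and oddness of the potential functions~$\potFunc_{\arc}$ together with weak connectivity to force a source node into it. Define $M \define \max_{v \in \nodes^{i}} \potential_{v}$ and $P \define \defset{v \in \nodes^{i}}{\potential_{v} = M}$. A first step is the following sign lemma: for any arc $\arc \in \arcs^{i}$, if both endpoints lie in $P$ then $\potFunc_{\arc}(\massflow_{\arc}) = 0$, and since $\potFunc_{\arc}$ is strictly increasing and odd with $\potFunc_{\arc}(0) = 0$, this yields $\massflow_{\arc} = 0$; if $\arc$ joins some $v \in P$ to some $w \in \nodes^{i} \setminus P$, then the potential difference is strictly positive from $v$'s side, so the sign of $\massflow_{\arc}$ is forced to correspond to flow leaving $v$ toward $w$.

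Next, I would pick an arbitrary $v \in P$ and use mass flow conservation \eqref{eq:expansion:massflow-conservation}. Arcs internal to $P$ contribute zero, and by the sign lemma each arc connecting $v$ to a node in $\nodes^{i} \setminus P$ contributes strictly positive net outflow at $v$. Hence the net inflow $\loadFlowVec_{v}$ is at most zero and is strictly negative if $v$ has any neighbor outside $P$. Since $\loadFlowVec_{v} = 0$ for inner nodes and $\loadFlowVec_{v} > 0$ for sinks, the only possibility in that case is $v \in \entries$, which produces the desired source attaining the maximum.

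The remaining case is that no node of $P$ has a neighbor outside $P$. Because $\graph^{i}$ is weakly connected this forces $P = \nodes^{i}$, so every node of the component realizes the maximum; then any source in $\entries^{i}$ satisfies the claim. The argument for the sink with minimum potential is entirely symmetric: replace $\max$ by $\min$ and reverse the inequalities on flows, concluding that a node attaining the minimum either has $\loadFlowVec_{v} > 0$ or the component collapses to a single potential level.

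The main obstacle I anticipate is the sign bookkeeping, since $\graph$ is a directed multigraph and an arc incident to $v$ may be in $\inArcs[v]$ or in $\outArcs[v]$. Both cases must be handled jointly using that $\potFunc_{\arc}$ is odd and strictly increasing, so that ``flow leaves $v$'' is expressed as a positive contribution to $v$'s net outflow regardless of the arc's orientation. Once this is organized, the contradiction via flow conservation is immediate and the weak-connectivity step is a short topological argument.
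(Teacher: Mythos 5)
Your proof is correct and takes essentially the same route as the paper, which for this observation offers only a one-sentence justification (strict monotonicity of the potential functions forces a potential drop in the direction of flow); your maximum-level-set argument combined with flow conservation is precisely the rigorous elaboration of that remark, and the orientation bookkeeping for the directed multigraph is handled correctly via the oddness and strict monotonicity of $\potFunc_{\arc}$. Two minor points, both shared with the paper's own statement rather than being gaps in your argument: sinks may satisfy $\loadFlowVec_{v}=0$ rather than $\loadFlowVec_{v}>0$ (your exclusion step only needs $\loadFlowVec_{v}\geq 0$ at sinks, so nothing breaks), and in the degenerate constant-potential case your conclusion implicitly requires $\entries^{i}\neq\emptyset$ and $\exits^{i}\neq\emptyset$, which the observation tacitly assumes.
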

This observation follows from the assumption that for every
arc~$\arc \in \arcs$, the potential
function~$\potFunc_{\arc}$ is strictly increasing.
Consequently, sending flow from a source to a sink node leads to a
positive potential drop.
Using this observation, we now prove that under specific requirements
for the potential bounds, we only have to compute the maximum
potential difference, \ie, solve
Problem~(\ref{eq:maximum-potential-difference}), between sources and
sinks.
\begin{lemma}
  \label{lemma:entry-exit-maximum-pot-diff}
  Let $\expVar \in X$ be fixed and
  let~$\connectedComponent^{i}=(\nodes^{i}, \arcs^{i})$ be a connected
  component of the expanded graph~$\graph'(\expVar)=(\nodes,
  \rev{\arcs(x)})$.
  For each source~$w \in \entries^{i} \define \entries
  \cap \nodes^{i}$,
  let the upper potential bound satisfy
  $\ubPot_{w} \leq \ubPot_{v}$ for all sinks and inner
  nodes~$v \in (\exits \cup \innodes) \cap \nodes^{i}$.
  For each sink~$\node \in \exits^{i} \define \exits
  \cap \nodes^{i}$, let the
  lower potential bound satisfy~$\lbPot_{\node} \geq \lbPot_{v}$ for
  all sources and inner nodes~$v \in (\entries \cup \innodes)
  \cap \nodes^{i}$.
  Then,
  \begin{equation}
    \label{eq:entry-exit-max-pot}
    \maxPotDiff_{\node, \otherNode}(\expVar) \leq \ubPot_{\node} -
    \lbPot_{\otherNode}
    \quad  \forAll (\node, \otherNode) \in \entries^{i} \times \exits^{i}
  \end{equation}
  implies
  \begin{equation*}
    \maxPotDiff_{\node, \otherNode}(\expVar) \leq \ubPot_{\node} -
    \lbPot_{\otherNode}
    \quad  \forAll (\node, \otherNode) \in (\nodes^{i})^{2}.
  \end{equation*}
\end{lemma}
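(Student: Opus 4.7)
The strategy is to reduce the bound for a general pair $(\node, \otherNode) \in (\nodes^{i})^{2}$ to the bound for an appropriately chosen source-sink pair. I would fix such a pair with $\node \neq \otherNode$ and let $(\loadFlowVec^{*}, \massflow^{*}, \potential^{*})$ be an optimal solution of Problem~\eqref{eq:maximum-potential-difference} for $(\node, \otherNode)$, so that $\maxPotDiff_{\node, \otherNode}(\expVar) = \potential^{*}_{\node} - \potential^{*}_{\otherNode}$. Applying Observation~\ref{obs:max-pressure-entry-low-pressure-exit} to this feasible triple yields a source $w^{*} \in \entries^{i}$ with $\potential^{*}_{w^{*}} = \max_{z \in \nodes^{i}} \potential^{*}_{z}$ and a sink $s^{*} \in \exits^{i}$ with $\potential^{*}_{s^{*}} = \min_{z \in \nodes^{i}} \potential^{*}_{z}$.

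Next, I would define a substitute source-sink pair $(\tilde{w}, \tilde{s}) \in \entries^{i} \times \exits^{i}$ by setting $\tilde{w} \define \node$ if $\node \in \entries^{i}$ and $\tilde{w} \define w^{*}$ otherwise, and analogously $\tilde{s} \define \otherNode$ if $\otherNode \in \exits^{i}$ and $\tilde{s} \define s^{*}$ otherwise. In every case, $\potential^{*}_{\node} \leq \potential^{*}_{\tilde{w}}$ and $\potential^{*}_{\otherNode} \geq \potential^{*}_{\tilde{s}}$ hold (trivially when $\tilde{w} = \node$ or $\tilde{s} = \otherNode$, and otherwise by the extremal properties of $w^{*}$ and $s^{*}$). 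Since the feasible region of Problem~\eqref{eq:maximum-potential-difference} does not depend on the pair of nodes appearing in the objective, the triple $(\loadFlowVec^{*}, \massflow^{*}, \potential^{*})$ is also feasible for the instance corresponding to $(\tilde{w}, \tilde{s})$, giving
\begin{equation*}
  \maxPotDiff_{\node, \otherNode}(\expVar)
  = \potential^{*}_{\node} - \potential^{*}_{\otherNode}
  \leq \potential^{*}_{\tilde{w}} - \potential^{*}_{\tilde{s}}
  \leq \maxPotDiff_{\tilde{w}, \tilde{s}}(\expVar)
  \leq \ubPot_{\tilde{w}} - \lbPot_{\tilde{s}},
\end{equation*}
where the last inequality is the hypothesis~\eqref{eq:entry-exit-max-pot} applied to the source-sink pair $(\tilde{w}, \tilde{s})$.

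Finally, I would compare $\ubPot_{\tilde{w}} - \lbPot_{\tilde{s}}$ to $\ubPot_{\node} - \lbPot_{\otherNode}$ using the two monotonicity assumptions on the potential bounds. If $\tilde{w} = \node$, then $\ubPot_{\tilde{w}} = \ubPot_{\node}$; otherwise $\node \in (\exits \cup \innodes) \cap \nodes^{i}$ and $\tilde{w} = w^{*} \in \entries^{i}$, so the first assumption yields $\ubPot_{\tilde{w}} \leq \ubPot_{\node}$. The symmetric case distinction using the second assumption gives $\lbPot_{\tilde{s}} \geq \lbPot_{\otherNode}$. Combining these two comparisons with the chain above concludes the proof.

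The main subtlety will be the choice of $(\tilde{w}, \tilde{s})$: simply taking $(w^{*}, s^{*})$ directly from Observation~\ref{obs:max-pressure-entry-low-pressure-exit} is not sufficient, because the hypothesis of the lemma does not order the upper potential bounds among different sources, nor the lower bounds among different sinks. Thus, whenever $\node$ happens to lie in $\entries^{i}$ or $\otherNode$ in $\exits^{i}$, one must keep $\node$ (respectively $\otherNode$) as the source (respectively sink) of the substitute pair rather than replacing it with~$w^{*}$ or~$s^{*}$. Getting this case distinction right is the only nontrivial step; everything else is assembling inequalities.
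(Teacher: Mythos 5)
Your proposal is correct and follows essentially the same route as the paper's proof: both invoke Observation~\ref{obs:max-pressure-entry-low-pressure-exit} at an optimal point of Problem~\eqref{eq:maximum-potential-difference}, replace whichever endpoint is not already a source (resp.\ sink) by the extremal source $w^{*}$ (resp.\ sink $s^{*}$) while keeping endpoints that are, use feasibility of the same triple for the substituted pair together with hypothesis~\eqref{eq:entry-exit-max-pot}, and finish with the monotonicity assumptions on the potential bounds. The only difference is presentational: the paper argues by contradiction with three explicit cases, whereas you argue directly via the uniform substitute pair $(\tilde{w},\tilde{s})$, which subsumes exactly those cases.
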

\begin{proof}
  Let the inequalities in~\eqref{eq:entry-exit-max-pot} be
  satisfied.
  We now contrarily assume that there is a node pair~$(m,n) \in
  (V^{i})^{2} \setminus \entries^{i} \times \exits^{i}$ that satisfies
  $\maxPotDiff_{m, n}(\expVar) > \ubPot_{m}
  -\lbPot_{n}$.
  Hence, there exists a solution~$(\loadFlowVec, \massflow,
  \potential)$ of Problem~(\ref{eq:maximum-potential-difference})
  with $\potential_{m} - \potential_{n} > \ubPot_{m}
  -\lbPot_{n}$.
  From Observation~\ref{obs:max-pressure-entry-low-pressure-exit}, it
  follows that there is a source~$w \in \entries^{i}$ with $\potential_{w}
  = \max_{v \in \nodes^{i}} \potential_{v}$ and a sink~$u \in \exits^{i}$ with
  $\potential_{u} = \min_{v \in \nodes^{i}} \potential_{v}$.
  We now conduct a case distinction.

  If $m \in (\exits \cup \innodes) \cap \nodes^{i}$ and~$n \in
  (\entries \cup \innodes) \cap \nodes^{i}$, we obtain the
  contradiction
  \begin{equation*}
    \maxPotDiff_{m, n}(\expVar) = \potential_{m} - \potential_{n}
    \leq
    \potential_{w} - \potential_{u}
    \leq
    \maxPotDiff_{w, u}(\expVar)
    \leq
    \ubPot_{w} - \lbPot_{u}
    \leq
    \ubPot_{m} - \lbPot_{n},
  \end{equation*}
  where the last inequality follows from the assumptions on the
  potential bounds.
  Additionally, if $m \in (\exits \cup \innodes) \cap \nodes^{i}$ and
  $n \in \exits^{i}$, we obtain the contradiction
  \begin{equation*}
    \maxPotDiff_{m, n}(\expVar) = \potential_{m} - \potential_{n}
    \leq
    \potential_{w} - \potential_{n}
    \leq
    \maxPotDiff_{w, n}(\expVar)
    \leq
    \ubPot_{w} - \lbPot_{n}
    \leq
    \ubPot_{m} - \lbPot_{n}.
  \end{equation*}
  Finally, if $m \in \entries^{i}$ and $n \in (\entries \cup \innodes)
  \cap \nodes^{i}$, then we obtain the contradiction
  \begin{equation*}
    \maxPotDiff_{m, n}(\expVar) = \potential_{m} - \potential_{n}
    \leq
    \potential_{m} - \potential_{u}
    \leq
    \maxPotDiff_{m, u}(\expVar)
    \leq
    \ubPot_{m} - \lbPot_{u}
    \leq
    \ubPot_{m} - \lbPot_{n}. \qedhere
  \end{equation*}
\end{proof}
As a consequence of this lemma, we can reduce the maximal number of
adversarial problems~(\ref{eq:maximum-potential-difference}) that have
to be solved to check robust feasibility from
$\abs{\nodes}^{2}$ to at most $\abs{\entries} \times \abs{\exits}$ many
problems.
In the case of real-world utility networks, this reduction is
significant because usually there are only a small number of sources
in these networks.
Furthermore, the assumptions regarding the potential bounds
\rev{of Lemma~3} are often satisfied in utility networks such as gas
or water networks.

We additionally remark that we can add to the adversarial
problems~\eqref{eq:maximum-potential-difference} \wrt~$(u,v)$ the
constraint
\begin{equation*}
  \potential_{u} - \potential_{v} \geq \ubPot_u - \lbPot_v.
\end{equation*}
If this constraint renders the adversarial problem infeasible, then
we can directly conclude that there is no violating scenario \wrt~$(u,v)$.
Preliminary computational results have shown that this
approach significantly speeds up the
computational process.
Analogously, we can add the constraints~$\massflow_{\arc} \leq
\lbMassflow_\arc$ to Problem~\eqref{eq:minimum-arc-flow}
and~\mbox{$\massflow_{\arc} \geq \ubMassflow_\arc$} to
Problem~\eqref{eq:maximum-arc-flow}.

\subsection{Computing Lower Bounds}
\label{subsec:computing-lower-bounds}

We now focus on the algorithmic idea to iteratively update
a lower bound for the objective function of the
MINLP~\eqref{expansion} \wrt~$S$
by exploiting the structure of
Algorithm~\ref{alg:adversarial-approach}.
Thus, we add to the MINLP~\eqref{expansion} the constraint
\begin{equation}
  \label{eq:objective-bound-MINLP}
  \sum_{\arc \in \expArcs} c_{\arc}
  \expVar_{\arc}
  \geq
  \kappa,
\end{equation}
where $\kappa \in \reals_{\geq 0}$ is a valid lower bound of the
objective value of Problem~\eqref{expansion} that we iteratively
update.
Here, ``valid'' means that we do not cut off any optimal solution by
adding Constraint~\eqref{eq:objective-bound-MINLP}.

Since we increase the set of scenarios~$S$ in each iteration of
Algorithm~\ref{alg:adversarial-approach}, we can use the optimal
objective value of Problem~\eqref{expansion} of the previous
iteration, \ie, without the last added ``worst-case''
scenario~$\loadFlowVec'$, as a lower bound for the optimal objective
value in the next iteration.
Thus, we can iteratively set~$\kappa =
\minCostsRobustExpansion(S \setminus \set{d'})$, where
$\minCostsRobustExpansion(S
\setminus \set{d'})$ is the
optimal objective value of Problem~\eqref{expansion} \wrt\ the
scenario set~$S \setminus \set{d'}$.
We note that obtaining this lower bound is straightforward and
computationally cheap since we already solved the corresponding MINLPs
in Algorithm~\ref{alg:adversarial-approach}.
However, this bound can be improved since it dismisses
all information regarding the last added worst-case
scenario~$\loadFlowVec'$.
To do so, we now present two relaxations of the
MINLP~\eqref{expansion}
that can be solved prior to solving Problem~\eqref{expansion} to
improve the lower objective bound~$\kappa$.

First, we can solve the MINLP~\eqref{expansion} only \wrt\ the last
added ``worst-case'' scenario~$\loadFlowVec'$, which is a relaxation
of Problem~\eqref{expansion} due to $\set{\loadFlowVec'} \subseteq S$.
The benefit of this simple relaxation is that the size of the
corresponding MINLP, \ie, the number of variables and constraints, does
not increase from iteration to iteration, in contrast to
Problem~\eqref{expansion} \wrt\ the entire set~$S$.
In the following, we denote this relaxation as~\reducedHeur.
Our computational results of Section~\ref{sec:numerical-results}
indicate that this relaxation is particularly useful at the early
iterations in Algorithm~\ref{alg:adversarial-approach}.

Second, we apply a well-known mixed-integer second-order cone
relaxation for
gas networks, see~\textcite{Sanches_et_al:2016}, to the considered
general potential-based flows.
This relaxation leads to a mixed-integer convex problem if for each
arc~$\arc \in \arcs$,
the potential functions~$\potFunc_{\arc}$ are convex on the
domain~$\reals_{\geq 0}$.
This is the case, \eg, for water and gas networks;
see~\eqref{different-potential-functions}.
In line with~\textcite{Sanches_et_al:2016}, we start with an
equivalent reformulation of the
Constraints~\eqref{eq:expansion:potential-coupling-existingArcs}--%
\eqref{ineq:expansion:potential-coupling-candidateArcs-lBound}
using additional binary variables that indicate the flow direction.
To this end, for each arc~$\arc=(u,v,\labelArc) \in \arcs$ and
\loadscenario~$\loadFlowVec \in \uncertaintySet$, we
introduce a binary variable~$y_{\arc}^{\loadFlowVec} \in \set{0,1}$.
The binary variable equals one if the flow is from node~$u$ to
node~$v$ and otherwise, it is zero. This is ensured by the
constraints
\begin{equation}
  \label{eq:flow-dir-cons}
  \lbMassflow_{\arc} (1-y^{\loadFlowVec}_{\arc}) \leq
  \massflow^{\loadFlowVec}_{\arc} \leq
  \ubMassflow_{\arc} y^{\loadFlowVec}_{\arc}, \quad \arc \in \arcs,
\end{equation}
where $\massflow^{\loadFlowVec}$ are the corresponding flows of
Problem~\eqref{expansion}.
We note that for an arc flow of zero, \ie,
$\massflow_{\arc}^{\loadFlowVec} = 0$, the
variable~$y_{\arc}^{\loadFlowVec}$ can be chosen arbitrarily.
Moreover, we assume that for each arc~$\arc \in \arcs$, the flow
bounds satisfy $\lbMassflow_{\arc} \leq 0 \leq \ubMassflow_{\arc}$,
which is a natural assumption in the context of potential-based flows.
Using the introduced binary variables for the flow directions and the
symmetry of the potential functions, \ie,
$\potFunc_{\arc}(-\massflow_{\arc}^{\loadFlowVec}) = -
\potFunc_{\arc}^{\loadFlowVec}(\massflow_{\arc}^{\loadFlowVec})$, we
can
equivalently\footnote{Here, equivalent means in terms of the feasible
  expansion decisions.} reformulate
Constraints~\eqref{eq:expansion:potential-coupling-existingArcs}--%
\eqref{ineq:expansion:potential-coupling-candidateArcs-lBound}
as
\begin{subequations}
  \label{cons:potential-link-flow-dir-vars}
  \begin{align}
    & \quad
      (\potential^{\loadFlowVec}_{\otherNode} -
      \potential^{\loadFlowVec}_{\node})
      +
      2y_{\arc}^{\loadFlowVec}(\potential^{\loadFlowVec}_{\node} -
      \potential^{\loadFlowVec}_{\otherNode})
      =
      \potFunc_{\arc}(\abs{\massflow^{\loadFlowVec}_{\arc}}), \quad
      \arc=(\node,\otherNode, \labelArc)
      \in \exArcs,
      \label{cons:potential-link-flow-dir-vars-equality}
    \\
    & \quad
      (\potential^{\loadFlowVec}_{\otherNode} -
      \potential^{\loadFlowVec}_{\node})
      +
      2y_{\arc}^{\loadFlowVec}(\potential^{\loadFlowVec}_{\node} -
      \potential^{\loadFlowVec}_{\otherNode})
      \leq
      \potFunc_{\arc}(\abs{\massflow^{\loadFlowVec}_{\arc}}) +
      (1-\expVar_{\arc}) M^{+}_{\arc},
      \quad \arc=(\node,\otherNode, \labelArc)
      \in \expArcs,
      \label{cons:potential-link-flow-dir-vars-geq-ineq}
    \\
    & \quad
      (\potential^{\loadFlowVec}_{\otherNode} -
      \potential^{\loadFlowVec}_{\node})
      +
      2y_{\arc}^{\loadFlowVec}(\potential^{\loadFlowVec}_{\node} -
      \potential^{\loadFlowVec}_{\otherNode})
      \geq
      \potFunc_{\arc}(\abs{\massflow^{\loadFlowVec}_{\arc}}) +
      (1-\expVar_{\arc}) M^{-}_{\arc},
      \quad \arc=(\node,\otherNode, \labelArc)
      \in \expArcs.
      \label{cons:potential-link-flow-dir-vars-leq-ineq}
  \end{align}
\end{subequations}
Analogously to~\eqref{eq:big-Ms-expansion-model}, for each
arc~$\arc=(u,v,\labelArc) \in \expArcs$, we
adapt the big-$M$
values to~\mbox{$M_{\arc}^{+} = \max \set{\ubPot_{u}-\lbPot_{v},
    \ubPot_{v}-\lbPot_{u}}$} and
$M_{\arc}^{-} =  \min \set{\lbPot_{u}-\ubPot_{v},
  \lbPot_{v}-\ubPot_{u}}$.
We note that the bilinear terms on the left-hand side of
Constraints~(\ref{cons:potential-link-flow-dir-vars}) can be
linearized using the inequalities of~\textcite{McCormick1976}; see
Appendix~\ref{sec:appendix} for the corresponding reformulations.
Using these constraints we can equivalently represent the expansion
problem~\eqref{expansion} by the MINLP
\begin{equation}
  \label{eq:expansion-with-flow-directions}
  \minCostsRobustExpansion(\uncertaintySet) \define
  \min_{\expVar, \massflow, \potential} \quad
  \sum_{\arc \in \expArcs} c_{\arc}
  \expVar_{\arc} \quad \st \quad
  \text{\eqref{eq:expansion:expansion-constraints}--\eqref{eq:expansion:massflow-conservation}}, \,
  \text{\eqref{eq:expansion:massflow-bounds-exArcs}--\eqref{eq:expansion:potential-bounds}}, \,
  \eqref{eq:flow-dir-cons}, \, \eqref{cons:potential-link-flow-dir-vars}.
\end{equation}
Analogously to~\textcite{Sanches_et_al:2016}, we now relax this
problem by replacing~\eqref{cons:potential-link-flow-dir-vars-equality} by
inequalities and by dismissing~\eqref{cons:potential-link-flow-dir-vars-geq-ineq}.
This leads to the relaxation
\begin{subequations}
  \label{eq:expansion-with-flow-directions-relaxation}
  \begin{align}
    \min_{\expVar, \massflow, \potential}
    \quad & \sum_{\arc \in \expArcs} c_{\arc}
            \expVar_{\arc}
    \\
    \st \quad &
                \text{\eqref{eq:expansion:expansion-constraints}--\eqref{eq:expansion:massflow-conservation}}, \,
                \text{\eqref{eq:expansion:massflow-bounds-exArcs}--\eqref{eq:expansion:potential-bounds}}, \,
                \eqref{eq:flow-dir-cons}, \,
                \eqref{cons:potential-link-flow-dir-vars-leq-ineq},
    \\
          & (\potential^{\loadFlowVec}_{\otherNode} -
            \potential^{\loadFlowVec}_{\node})
            +
            2y_{\arc}^{\loadFlowVec}(\potential^{\loadFlowVec}_{\node} -
            \potential^{\loadFlowVec}_{\otherNode})
            \geq
            \potFunc_{\arc}(\abs{\massflow^{\loadFlowVec}_{\arc}}),
            \quad \arc=(\node,\otherNode, \labelArc)
            \in \exArcs.
  \end{align}
\end{subequations}
Using McCormick inequalities, this relaxation can again be
reformulated as a convex MINLP if the potential functions are convex
on the nonnegative domain.
In addition, for the later considered gas networks, this problem turns
into a mixed-integer second-order cone problem.
The corresponding reformulations are explicitly outlined in
Appendix~\ref{sec:appendix}.
Overall, in Algorithm~\ref{alg:adversarial-approach}, we can solve
Problem~\eqref{eq:expansion-with-flow-directions-relaxation} \wrt\ the
scenario set~$S$ at the beginning of each iteration to obtain a lower
bound for the objective value of the MINLP~\eqref{expansion}.
Our computational results show that we do not only obtain a
tight lower bound, but it is also often the case that
the relaxation provides a feasible and, thus, optimal point for the
MINLP~\eqref{expansion}. For more details see
Section~\ref{sec:numerical-results}.

We finally remark that we also add the obtained lower bounds for the
objective value of the MINLP~\eqref{expansion} to the described
relaxations as well.
The important difference is that adding these bounds possibly cut
off solutions of the relaxations.
However, it will preserve all optimal solutions of the
MINLP~\eqref{expansion} \wrt~$U$ in the feasible region of the
relaxations.
Thus, adding these lower bounds for the objective value can strengthen
the presented relaxations.

\subsection{Acyclic Inequalities}
\label{subsec:acyclic-inequalities}

We now brief\/ly review the valid inequalities for potential-based
flows derived in~\textcite{Habeck_and_Pfetsch:2022}.
Adding these inequalities to the MINLP~\eqref{expansion} significantly
speeds up the computations.
These valid constraints exploit that in the considered setting of
potential-based flows, there cannot be any cyclic flow.
To see this, let $C$ be a cycle in the undirected version of the
network~$G=(\nodes, \arcs)$.
Considering this cycle in the original directed graph~$G$ leads to two
subsets of arcs~$C_{1}, C_{2} \subseteq \arcs$.
Here, $C_{1}$ represents the corresponding forward arcs of the cycle
and $C_{2}$ represents the backward arcs, \ie, those arcs have the
opposite direction in the original graph.
Summing up the corresponding potential
constraints~\eqref{eq:expansion:potential-coupling-existingArcs} along
the cycle leads to
\begin{equation*}
  \sum_{\arc \in C_{1}} \potFunc_\arc
  (\massflow^{\loadFlowVec}_{\arc})
  -
  \sum_{\arc \in C_{2}} \potFunc_\arc
  (\massflow^{\loadFlowVec}_{\arc})
  =
  \sum_{\arc=(u,v,\labelArc) \in C_{1}} \potential_{u} -
  \potential_{v}
  -
  \sum_{\arc=(u,v,\labelArc) \in C_{2}} \potential_{u} -
  \potential_{v} = 0.
\end{equation*}
Since the potential functions are strictly increasing and symmetric
\wrt~zero, this implies that there cannot be any cyclic flow.
As described in~\textcite{Habeck_and_Pfetsch:2022}, using the flow
direction variables~$y_{\arc}^{\loadFlowVec}$, this
acyclic property can be translated to the valid inequalities

\begin{equation} \label{eq:acyclic-inequalities}
  \sum_{\arc \in C_{1}} y_{\arc}^{\loadFlowVec}
  + \sum_{\arc \in C_{2}} (1-y^{\loadFlowVec}_{\arc})
  \leq \abs{C}-1,
  \quad
  \sum_{\arc \in C_{1}} (1-y_{\arc}^{\loadFlowVec})
  + \sum_{\arc \in C_{2}} y^{\loadFlowVec}_{\arc}
  \leq \abs{C}-1.
\end{equation}
In our computational study, we add these valid inequalities not only
to the MINLP~\eqref{expansion}, but also to the relaxations of the
previous section since these relaxations preserve the acyclic property
of potential-based flows.

We emphasize that we can add these inequalities for each cycle
of the graph that contains all existing arcs and all candidate arcs.
This is based on the observation that if such a cycle contains an
arc~$\arc$
that is not built, then the corresponding flow
is zero and we can arbitrarily choose the flow direction
variable~$y_{\arc}^{\loadFlowVec}$.
Consequently, the corresponding acyclic
inequality~\eqref{eq:acyclic-inequalities} is redundant.

We also exploit these
acyclic inequalities to tighten the given
arc flow bounds \wrt~a given \loadscenario a priori to solving the
relaxations, respectively the MINLP~\eqref{expansion}.
To this end, for each \loadscenario~$\loadFlowVec \in S$ and arc~$\arc
\in \arcs$, we solve the
mixed-integer linear optimization problems
\begin{equation}
  \label{eq:mip-tighten-flow-bounds}
  \max_{\massflow^{\loadFlowVec}} \quad \massflow^{\loadFlowVec}_{\arc} \quad
  \st \quad \eqref{eq:expansion:massflow-conservation}, \,
  \eqref{eq:acyclic-inequalities},
  \quad
  \min_{\massflow^{\loadFlowVec}} \quad \massflow^{\loadFlowVec}_{\arc} \quad
  \st \quad \eqref{eq:expansion:massflow-conservation}, \,
  \eqref{eq:acyclic-inequalities},
\end{equation}
to obtain a upper and lower bound for the arc
flow~$\massflow_{\arc}^{\loadFlowVec}$.
We note that
Problem~\eqref{eq:mip-tighten-flow-bounds} is a simple uncapacitated
linear flow problem with the additional restriction that the flows are
acyclic.

\section{How Many Scenarios Do We Need?}
\label{sec:worst-case-scenarios}

We analyze the number of added worst-case scenarios in
Algorithm~\ref{alg:adversarial-approach} using an academic example.
The considered graph appears in similar ways as
subnetworks in many real-world utility networks.
On the one hand, we show that the considered topology can
theoretically lead to many different worst-case scenarios.
On the other hand, we highlight that under realistic assumptions on
the capacities of the sources, the latter most likely does not
occur, which we also empirically observe in our computational study.

We now consider the existing network~$\graph=(\nodes,
\arcs)$ with a single source~$\entries=\set{\rev{s}}$,
a single inner node~$\innodes=\set{0}$, and the
sinks~$\exits=\set{1,\ldots,n}$ with~$n \geq 2$, \ie, \mbox{$\nodes =
\entries \cup \exits \cup \innodes$}.
The arcs are given by~\mbox{$\arcs=\set{(\rev{s}, 0, \text{ex})} \cup
  \defset{(0,i, \text{ex})}{i \in \set{1,\ldots,n}}$}.
Here, ``ex'' represents the label for the existing arcs.
For the ease of presentation, we now focus on gas networks, \ie, we
consider the potential functions~$\potFunc_{\arc}(\massflow_{\arc}) =
\Lambda_{\arc} \massflow_{\arc} \abs{\massflow_{\arc}}$.
For each arc~$\arc \in \arcs$, we further
choose~$\Lambda_{\arc}=1$ and for each node~$w \in \nodes$, we set
the upper and lower potential bounds~$[\lbPot_{w},
\ubPot_{w}]=[1,5]$.
In addition, we dismiss arc flow bounds.
A visualization of this network is given by
Figure~\ref{fig:network-worst-case-scenario-example-A}.

\begin{figure}
  \captionsetup[subfigure]{justification=centering}
  \begin{subfigure}[t]{0.32\textwidth}
    \begin{center}
      \begin{tikzpicture}[thick]
\node[draw, circle, inner sep=0.07cm, fill=gray!30] (A) at (2,3.5) {$s$};
\node[draw, circle, inner sep=0.05cm, fill=gray!30] (B) at (2,2) {$0$};
\node[draw, circle, inner sep=0.05cm, fill=gray!30] (C) at (0.5,0.5) {$1$};
\node[draw, circle, inner sep=0.05cm, fill=gray!30] (D) at (1.5,0.5) {$2$};
\node[] (E) at (2.5,0.5) {$\ldots$};
\node[draw, circle, inner sep=0.06cm, fill=gray!30] (F) at (3.5,0.5) {$n$};
\draw[->, >=latex] (A) -- node[above=0.2cm] {} ++(B);
\draw[->, >=latex] (B) -- node[above=0.2cm] {} ++(C);
\draw[->, >=latex] (B) -- node[below=0.2cm] {} ++(D);
\draw[->, >=latex] (B) -- node[below=0.2cm] {} ++(F);
\end{tikzpicture}
      \caption{Existing network}
      \label{fig:network-worst-case-scenario-example-A}
    \end{center}
  \end{subfigure}
  \begin{subfigure}[t]{0.32\textwidth}
    \begin{center}
      \begin{tikzpicture}[thick]
\node[draw, circle, inner sep=0.07cm, fill=gray!30] (A) at (2,3.5) {$s$};
\node[draw, circle, inner sep=0.05cm, fill=gray!30] (B) at (2,2) {$0$};
\node[draw, circle, inner sep=0.05cm, fill=gray!30] (C) at (0.5,0.5) {$1$};
\node[draw, circle, inner sep=0.05cm, fill=gray!30] (D) at (1.5,0.5) {$2$};
\node[] (E) at (2.5,0.5) {$\ldots$};
\node[draw, circle, inner sep=0.06cm, fill=gray!30] (F) at (3.5,0.5) {$n$};
\draw[->, >=latex] (A) -- node[above=0.2cm] {} ++(B);
\draw[->, red, >=latex, dashed] (A.south) to [out=210,in=140] (B.north);
\draw[->, >=latex] (B) -- node[above=0.2cm] {} ++(C);
\draw[->, red, >=latex, dashed] (B.220) to [out=190,in=80] (C.53);
\draw[->, >=latex] (B) -- node[below=0.2cm] {} ++(D);
\draw[->, >=latex] (B) -- node[below=0.2cm] {} ++(F);
\end{tikzpicture}
      \caption{Expanded network after the first iteration}
      \label{fig:network-worst-case-scenario-example-B}
    \end{center}
  \end{subfigure}
  \begin{subfigure}[t]{0.32\textwidth}
    \begin{center}
      \begin{tikzpicture}[thick]
\node[draw, circle, inner sep=0.07cm, fill=gray!30] (A) at (2,3.5) {$s$};
\node[draw, circle, inner sep=0.05cm, fill=gray!30] (B) at (2,2) {$0$};
\node[draw, circle, inner sep=0.05cm, fill=gray!30] (C) at (0.5,0.5) {$1$};
\node[draw, circle, inner sep=0.05cm, fill=gray!30] (D) at (1.5,0.5) {$2$};
\node[] (E) at (2.5,0.5) {$\ldots$};
\node[draw, circle, inner sep=0.06cm, fill=gray!30] (F) at (3.5,0.5) {$n$};
\draw[->, >=latex, thick] (A) -- node[above=0.2cm] {} ++(B);
\draw[->, red, >=latex, dashed] (A.south) to [out=210,in=140] (B.north);
\draw[->, >=latex, thick] (B) -- node[above=0.2cm] {} ++(C);
\draw[->, red, >=latex, dashed] (B.220) to [out=190,in=80] (C.53);
\draw[->, >=latex, thick] (B) -- node[below=0.2cm] {} ++(D);
\draw[->, red, >=latex, dashed] (B.247) to [out=220,in=110] (D.78);
\draw[->, >=latex, thick] (B) -- node[below=0.2cm] {} ++(F);
\draw[->, red, >=latex, dashed] (B.311) to [out=280,in=170] (F.143);
\end{tikzpicture}
      \caption{Robust optimal solution}
      \label{fig:network-worst-case-scenario-example-C}
    \end{center}
  \end{subfigure}
  \caption{Academic network: existing arcs in solid black,
    expanded arcs in dashed red.}
  \label{fig:network-worst-case-scenario-example}
\end{figure}
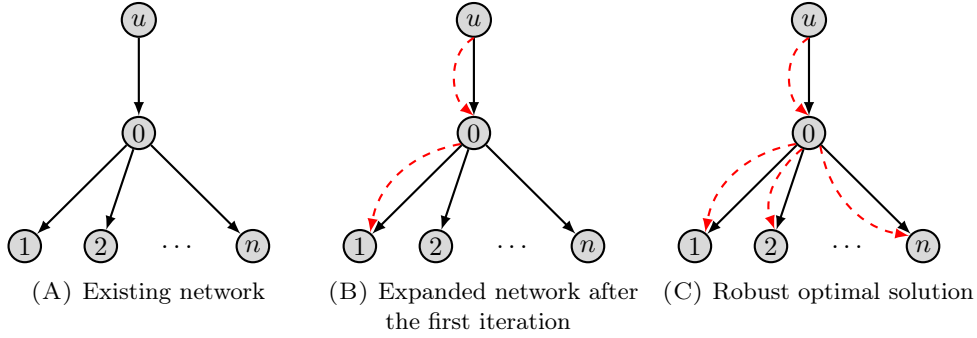

We now apply Algorithm~\ref{alg:adversarial-approach} to robustify the
existing network~$\graph$.
To do so, we have the expansion
candidates~$\expArcs=\set{(\rev{s}, 0, \text{ca})} \cup
\defset{(0,i, \text{ca})}{i \in \set{1,\ldots,n}}$ with
$\Lambda_{\arc} =1, \arc \in \expArcs$, \ie, for each
arc, we have an identical expansion arc in parallel to the existing
one.
Further, we consider the box uncertainty set
\begin{equation*}
  U = \Defset{\loadFlowVec \in \reals^{\nodes}}{\loadFlowVec_{\rev{s}}
    \rev{+} \sum_{\otherNode \in \exits}
    \loadFlowVec_{\otherNode} \rev{=0},
    \ \loadFlowVec_{w} \in [0,2], w \in \nodes
    \rev{\setminus{\set{s}}},
    \ \rev{\loadFlowVec_{s} \in [-2,0],}
    \ \loadFlowVec_{0} = 0}.
\end{equation*}
For applying Algorithm~\ref{alg:adversarial-approach} to this
instance, we have to solve in each iteration the adversarial
problems~(\ref{eq:maximum-potential-difference}).
Afterward, if applicable, we add the scenario that violates the
corresponding potential bounds most.

\rev{Overall, Algorithm~\ref{alg:adversarial-approach} terminates after
  $n$~iterations with a robust  network, in which every candidate arc is
  built; see Figure~\ref{fig:network-worst-case-scenario-example-C}.
  In every iteration, we consider a different
  worst-case scenario~$\loadFlowVec \in U$ given
  by~$-\loadFlowVec_{\rev{s}} = \loadFlowVec_{\otherNode}= 2$ with
  $\otherNode \in \set{1,\ldots,n}$ and the
  remaining loads are zero.
  This leads to expanding the unique path from $s$ to $\otherNode$;
  see, \eg, Figure~\ref{fig:network-worst-case-scenario-example-B} in
  which, w.l.o.g., $v=1$ is assumed.
  Consequently, to obtain a robust network, the algorithm considered the
  following set of load scenarios
  \begin{equation*}
    S = \set{-\loadFlowVec_{\rev{s}} = \loadFlowVec_{\otherNode}=2, \
      \loadFlowVec_{w} = 0, w \in \exits \setminus \set{\otherNode}
      \forAll
      \otherNode \in \exits},
  \end{equation*}
  which has a cardinality of~$\abs{\exits}$.
  Thus, the number of worst-case scenarios scales
  with the number of sinks in the considered network.}

However, this relatively large number of scenarios is based on the
very small capacity of the source that can only satisfy
the maximal demand of a single sink at once.
To see this, we now consider an adapted instance in which the
capacity of the source suffices to satisfy all demands of the exits at
once, \ie, the uncertainty set is given by
\begin{equation*}
  \tilde{U} = \Defset{\loadFlowVec \in
    \reals^{\nodes}}{\loadFlowVec_{\rev{s}} \rev{+} \sum_{\otherNode \in
      \exits}
    \loadFlowVec_{\otherNode} \rev{=0},
    \ \loadFlowVec_{\otherNode} \in [0,2],
    \otherNode \in \exits, \ \loadFlowVec_{0} = 0, \
    \rev{0 \geq }  \loadFlowVec_{\rev{s}} \rev{ \geq -}2 \abs{\exits}}.
\end{equation*}
We add another expansion arc~$(\rev{s},0,\text{large})$
to~$\expArcs$ with \mbox{$\Lambda_{(\rev{s},0,\text{large})} =
  1/(2\abs{\exits}-1)^{2}$}.
This is necessary to guarantee robust feasibility since the maximal
arc flow between~$s$ and~$0$
increases within the new uncertainty set~$\tilde{U}$,
causing a larger potential drop between these nodes.
Applying Algorithm~\ref{alg:adversarial-approach} to the adapted
instance leads to the result that only a single worst-case scenario
is necessary to build a robust network.
This worst-case scenario is given by~$\loadFlowVec_{\rev{s}} = \rev{-}2
\abs{\exits}, \, \loadFlowVec_{0} = 0$, and the remaining exits are at
their maximum, \ie, $\loadFlowVec_{\otherNode} = 2, \otherNode \in
\exits$.
Considering this worst-case scenario leads to the same robust network
as in the original instance except that we \rev{build} the larger arc
between $s$ and~$0$, \ie, we \rev{build} $(\rev{s}, 0 ,\text{large})$
with $\Lambda_{(\rev{s},0,\text{large})} = 1/(2\abs{\exits}-1)^{2}$
instead of~$(\rev{s}, 0, \text{ca})$ with~$\Lambda_{(\rev{s}, 0,
  \text{ca})} = 1$.
Consequently, Algorithm~\ref{alg:adversarial-approach} terminates
after a single iteration with an optimal solution.
More general, for the considered network, it can be shown that the
number of worst-case scenarios is bounded from above
by~$\lceil{2\abs{\exits}}/\abs{\loadFlowVec_{\rev{s}}^{+}}\rceil$,
where $\loadFlowVec_{\rev{s}}^{+}$ is the maximal capacity of the
single source.
This is based on the observation that for robustifying the considered
network, for each sink, it suffices that there is at least one
scenario in which the demand of the sink is at its maximum.
The latter is not necessarily true for general networks.
Concluding, the number of worst-case scenarios necessary to
obtain a robust network directly depends on the capacity of the
single source in the considered network topology.

We emphasize that this topology or related ones with the same
behavior regarding the worst-case scenarios are contained as
subnetworks in
many real-world instances of utility networks.
More precisely, the single source corresponds
to the connection point to a large distribution network and the
remaining network corresponds to a local distribution network.
Thus, it is natural that the single source can provide the demand
of all exits.
For this subnetwork only a single worst-case scenario
is then sufficient to guarantee robust feasibility.
This illustrates that the approach is especially suitable for real-world
utility networks since, most likely, only a few worst-case scenarios
are necessary to \rev{build} a robust network.
This is in line with our computational study, in which we only need a
few worst-case scenarios and this number of scenarios is often
close to the number of different sources in the network.

\section{Computational Study}
\label{sec:computational-study}

We now apply the presented adversarial approach to gas
networks.
To this end, we consider different use cases such as robustifying
existing networks and building new ones from scratch.
In Sections~\ref{sec:gas-instances}
and~\ref{sec:modeling-uncertainty}, we discuss the considered gas
network instances and the uncertainty modeling.
In the Section~\ref{sec:computational-algorithmic-setup}, we then
specify the implementation of
Algorithm~\ref{alg:adversarial-approach} and how we incorporate the
enhanced solution techniques of
Section~\ref{sec:enhanced-solution-techniques}.
Finally, we present and discuss the numerical results in
Section~\ref{sec:numerical-results}.

\subsection{Gas Networks}
\label{sec:gas-instances}

All the instances used in the computational study are based on the
\textsf{GasLib} library; see \textcite{Schmidt_Assmann_2017}.
We consider the network~\textsf{GasLib-40} as
given in~\textcite{Schmidt_Assmann_2017}.
Moreover, we create one larger network based on
the significantly larger instance~\textsf{GasLib-135}.
In continuation of the \textsf{GasLib} notation, we call the newly
created instance~\textsf{GasLib-60}, i.e., it has 60 nodes.
We obtain this network by removing the pipes \textsf{pipe\_46},
\textsf{pipe\_93}, \textsf{pipe\_96}, \textsf{pipe\_103}, \textsf{pipe\_104},
and \textsf{pipe\_106} from the \textsf{GasLib-135} network.
Then, \textsf{GasLib-60} represents the weakly connected component
containing~\textsf{source\_3}.
The resulting network has 60 nodes, 3 sources, 39 sinks, 61 pipes, and 18
compressors.

For the networks $\graph = (\nodes, \arcs)$ of
\textsf{GasLib-40} and~\textsf{GasLib-60}, we now create different
instances as follows.
The set $\nodes$ of nodes remains unchanged.
Since we consider pipe-only networks, we replace all occurring
resistors, control valves, and
compressors by so-called short pipes, i.e., by pipes of zero
length that do not induce any pressure loss ($\Lambda_{\arc} = 0$).
We consider three options on how to choose the existing pipes~$\exArcs$ from
the set~$\arcs$, which consists of pipes and short pipes.

In the option \textsf{unchanged}, we set~$\exArcs=\arcs$, \ie, except
for the replaced active  elements, the set
$\exArcs$ of existing arcs coincides with the set of arcs of the
given \textsf{GasLib} instance.
In the option \textsf{spanning tree}, the existing network is assumed to
be a spanning tree.
Thus, for the given gas network instance, we compute a spanning
tree, which also contains all short pipes of the network.
The arcs of this spanning tree then represent the existing network,
\ie, these arcs are stored in~$\exArcs$.
In the option \textsf{greenfield}, we set~$\exArcs=\emptyset$, \ie, we
assume that no arcs are built yet.

To create the candidate arcs $\expArcs$, we apply the following
procedure.
All short pipes that are already an element of $\exArcs$ are not considered
as possible candidate arcs and the remaining short pipes are added
once to the set of candidate arcs~$\expArcs$.
For each pipe of the original network~$\graph=(\nodes, \arcs)$, we
then add multiple candidate
arcs in parallel with different choices for the corresponding diameter.
To do so, we introduce factors~$\tau_1, \ldots, \tau_m \in (0,\infty)$
with~$m \geq 1$ and then each pipe is added~$m$ times with the
reported diameter of the \textsf{GasLib} multiplied once by $\tau_i$
with $i \in \set{1,\ldots,m}$.
For the instances~\textsf{unchanged} and~\textsf{spanning tree}, we
consider the factors~$\set{0.3,0.7,1.0,1.3}$, \ie, for each
expansion candidate, we have four diameter options.
For the even more challenging~\textsf{greenfield} instances, we choose
three diameter options per expansion candidate given by the
factors~$\set{0.5,1.0,1.5}$.

For estimating the investment costs of building new pipes, we follow
the cost estimation of~\textcite{mischner2015gas2energy,
  Reuss_et_al:2019}.
Consequently, the costs of building a pipe~$\arc$ (in
$\si{\euro\per\meter}{}$, i.e., per length) depend on the
corresponding diameter~$D_{\arc}$ (in \SI{}{\milli\meter}).
These investment costs are computed using $278.24 \exp(1.6 D_{\arc})$.
We further do not charge any costs for building short pipes.

We now brief\/ly discuss the implementation of the
potential-based flow model for the considered case of gas networks.
For an arc~$\arc \in \exArcs \cup \expArcs$, the potential function is
explicitly
given by~$  \potFunc_{\arc}(\massflow_{\arc}) = \Lambda_{\arc}
\massflow_{\arc} \abs{\massflow_{\arc}}$; see \textcite{Gross2019}.
The pressure loss coefficient~$\Lambda_{\arc} \geq 0$ is computed
using the formula
\begin{equation*}
  \Lambda_{\arc} = \left( \frac{4}{\pi} \right)^2 \lambda_{\arc}
  \frac{R_{\text{s}} T_{\text{m}} L_{\arc} z_{\text{m},\arc}}{D_{\arc}^5}
\end{equation*}
with $\lambda_{\arc}$ being the pipe's friction factor given by the formula
of Nikuradse, $R_{\text{s}}$ the specific gas constant, $T_{\text{m}}$
a constant mean temperature, $L_{\arc}$ the pipe's length,
and~$D_{\arc}$ the pipe's diameter.
In addition, $z_{\text{m},\arc}$ is the pipe's mean compressibility factor given by
the formula of Papay and an a priori estimation of the mean pressure.
Furthermore, we set~$\Lambda_{\arc} = 0$ if $\arc$ is a short pipe.
For more information and detailed explanations, we refer
to~\textcite{FuegenschuhGasBook}.

\subsection{Uncertainty Modeling}
\label{sec:modeling-uncertainty}

In the computational study, we consider four different polyhedral
uncertainty sets.
We start with a baseline scenario~\mbox{$\loadFlowVec^{\text{base}}
  \in \reals^{\nodes}$}, which then is
affected by certain load fluctuations.
For \textsf{GasLib-40}, we choose the scenario reported in
\textcite{Schmidt_Assmann_2017} as the baseline scenario.
For the newly created instance
\textsf{GasLib-60},  we
choose the following baseline scenario.
The \load is set to
\SI{-520}{(1000\,N\cubic\metre\per\hour)} for all sources and to
\SI{40}{(1000\,N\cubic\metre\per\hour)} for all sinks.

As introduced in Equation~\eqref{eq:demand-uncertainty-set},
each uncertainty set consists of all balanced \rev{load scenarios}
\begin{equation*}
  L \define \Defset{\loadFlowVec \in \reals^{\nodes}}{\sum_{\node \in
      \entries} \loadFlowVec_{\node} \rev{+} \sum_{\node
      \in \exits}  \loadFlowVec_{\node} \rev{=0},
    \ \loadFlowVec_{\node} = 0, \
    \node \in \innodes}
\end{equation*}
intersected with a non-empty and compact set $Z \subset
\reals^{\nodes}$.
We now discuss our polyhedral choices for this compact set~$Z$.

We start with a simple box as a first uncertainty set, \ie, lower and upper
bounds for the injections and withdrawals are additionally imposed.
For the sinks, we consider a lower bound~$\underline{z}^{-} \in [0,1]$ and
an upper bound~$\bar{z}^{-} \in [1, \infty)$, which indicate the
percentage deviation from the baseline scenario.
Analogously, \rev{and taking into account the negative sign of the
  source loads,} we define a lower
bound~$\underline{z}^{+} \in [0,1]$ and an upper bound~$\bar{z}^{+}
\in [1, \infty)$ \rev{for the sources}.
Then, the box uncertainty set is given by
\begin{equation*}
  U_{\text{box}} \define \Defset{\loadFlowVec \in L}{
    \loadFlowVec_{\node}
    \in [\underline{z}^{-} \loadFlowVec^{\text{base}}_{\node},
    \bar{z}^{-}\loadFlowVec^{\text{base}}_{\node}], \ \node \in \exits,
    \ \loadFlowVec_{\node}
    \in [\rev{\bar{z}^{+}} \loadFlowVec^{\text{base}}_{\node},
    \rev{\underline{z}^{+}}\loadFlowVec^{\text{base}}_{\node}], \ \node \in \entries
  }.
\end{equation*}
For~$\underline{z}^{-} = \underline{z}^{+} = 0$, this uncertainty set
allows that sinks or sources fail completely, \ie, they are
switched off.

Based on this box uncertainty set, we define three further uncertainty
sets.
For the first modification, we ensure by two additional inequalities that
the total amount of injections does not exceed or drop below a certain level
regarding the total injections in the baseline scenario.
Hence, we introduce a percentage bound for the
lower ($\underline{i} \in [0,1]$) and upper ($\bar{i} \in [1,\infty)$)
level of total injections.
Then, the box uncertainty set with additional bounds on the total
injections is given by
\begin{equation*}
  U_{\text{sum}} \define U_{\text{box}}
  \cap \Defset{\loadFlowVec \in L}{\rev{\bar{i}} \sum_{\node \in
      \entries} \loadFlowVec^{\text{base}}_{\node} \leq \sum_{\node \in \entries}
    \loadFlowVec_{\node} \leq \rev{\underline{i}} \sum_{\node \in
      \entries} \loadFlowVec^{\text{base}}_{\node}}.
\end{equation*}

For the next modification of the box uncertainty set~$U_{\text{box}}$, we
bound the absolute difference of deviations from the baseline
scenario for selected pairs of withdrawals.
To do so, we consider a randomly chosen subset of the
sinks~$\tilde{V}_{-} \subset \exits$ and an upper bound for the
absolute difference~$\bar{d} \geq 0$.
Then, for a balanced \loadscenario~$\loadFlowVec \in L$, we add the
inequality
\begin{equation}
  \left| (\loadFlowVec^{\text{base}}_{\node})^{-1} \loadFlowVec_{\node}
    - (\loadFlowVec^{\text{base}}_{\otherNode})^{-1} \loadFlowVec_{\otherNode}
  \right|
  \leq \bar{d}, \quad (\node,\otherNode) \in \tilde{V}_{-} \times
  \tilde{V}_{-}.
  \label{eq:correlation-sinks}
\end{equation}
This leads to the third uncertainty set
\begin{equation*}
  U_{\text{corr}} \define U_{\text{box}} \cap
  \Defset{\loadFlowVec \in L}{\eqref{eq:correlation-sinks} \text{ holds}}.
\end{equation*}

Before we continue with the fourth uncertainty set, let us discuss
some details about Condition~\eqref{eq:correlation-sinks} and its
implementation.
The idea behind this condition is that there could be withdrawals that
follow the same consumption pattern, \eg, due to temperature dependency
in case of an energy carrier used for heating.
In our computational study, we obtain the set~$\tilde{V}_{-}$
in~\eqref{eq:correlation-sinks} by randomly selecting
sinks from~$\exits$ until a certain percentage $w$ of the number of
sinks is reached or just exceeded.
The selected sinks form the set $\tilde{V}_{-}$.

The fourth uncertainty set is the intersection of all three previously
defined uncertainty sets.
Thus, it is given by
\begin{equation*}
  \label{eq:box_sum_corr}
  U_{\text{all}} \define U_{\text{box}}
  \cap U_{\text{sum}}
  \cap U_{\text{corr}}.
\end{equation*}

Table~\ref{tab:uncertainty_sets} provides an overview of the specific
parameterization of the uncertainty
sets used.
We consider the case that the \load of sinks
fluctuates slightly more than the \load of sources, i.e.,
$[\underline{z}^{+}, \bar{z}^{+}] \subset [\underline{z}^{-},
\bar{z}^{-}]$ holds.
Moreover, in case of correlated \rev{loads}, \ie, $U_{\text{corr}}$
and $U_{\text{all}}$, we assume that not all but
\SI{80}{\percent} of the sinks are correlated.

\begin{table}
  \caption{Parameterization of the uncertainty sets.}
  \label{tab:uncertainty_sets}
  \begin{tabular}{cccccccc}
    \toprule
    $\underline{z}^{-}$ & $\bar{z}^{-}$ &
                                          $\underline{z}^{+}$ & $\bar{z}^{+}$ & $\underline{i}$ & $\bar{i}$
    & $\bar{d}$ & $w$ \\
    \midrule
    0.6 & 1.4 & 0.7 & 1.3 & 0.8 & 1.2 & 0.1 & 80 \\
    \bottomrule
  \end{tabular}
\end{table}

\subsection{Algorithmic and Computational Setup}
\label{sec:computational-algorithmic-setup}

We now brief\/ly discuss the implementation of the enhanced solution
techniques of Section~\ref{sec:enhanced-solution-techniques} in
Algorithm~\ref{alg:adversarial-approach}.
More precisely, we consider two different configurations of this
algorithm in the following.
For both approaches, we model the MINLP for the expansion decision
with flow direction variables, \ie, we use
Model~(\ref{eq:expansion-with-flow-directions}).
Analogously, we also model the adversarial
problems~(\ref{eq:maximum-potential-difference}) with flow direction variables,
which we outline in Appendix~\ref{sec:appendix}.
For doing so, we use a single flow direction variable for parallel
pipes because parallel pipes always have the same flow direction.
Further, we require by additional constraints that only a single
expansion pipe can be built in parallel to an existing one.
This is legitimate because building multiple new parallel pipes can be
equivalently reformulated as building a single pipe;
see~\textcite{Lenz2016}.
Based on preliminary computational results, we add the
acyclic inequalities, see Section~\ref{subsec:acyclic-inequalities},
to the occurring MINLPs and also to the convex
relaxations~(\ref{eq:expansion-with-flow-directions-relaxation}).
As explained in Section~\ref{subsec:computing-lower-bounds}, we
further use
the optimal objective value of the expansion decision of the previous
iteration as a lower bound for the objective value of the
MINLPs~(\ref{eq:expansion-with-flow-directions}) and the upcoming
relaxations \textsf{Reduced Relaxation} as well as
Problems~(\ref{eq:expansion-with-flow-directions-relaxation}).
The considered pipe-only gas networks typically do not impose any
bounds on the arc flow because the flow is implicitly bounded by the
potential bounds at the incident nodes.
Consequently, we can dismiss the very large flow bounds of the
\textsf{GasLib} instances and we do not have to solve the adversarial
problems~(\ref{eq:minimum-arc-flow}) and~(\ref{eq:maximum-arc-flow}).
However, prior to each iteration, we apply some basic presolve to
compute tighter lower and upper arc flow bounds by solving
Problems~\eqref{eq:mip-tighten-flow-bounds} in order to strengthen
the formulations.
We note that all instances satisfy the requirements of
Lemma~\ref{lemma:entry-exit-maximum-pot-diff} and, thus, we check
robust feasibility using the characterization of this lemma.
Finally, for both approaches, the scenario set~$S$ only contains the
baseline scenario~$\loadFlowVec^{\text{base}}$ in the first iteration,
\ie, we set~$S = \set{\loadFlowVec^{\text{base}}}$.

After these adaptions, we denote as the baseline approach~(\minlpAcyclic)
the plain version of Algorithm~\ref{alg:adversarial-approach}.
For the second approach~(\convexHeur), we extend this baseline approach by
two methods to compute tighter lower bounds for the optimal objective
value of the robust network design
problem~(\ref{eq:expansion-with-flow-directions}), which are
computed iteratively.
More precisely, before solving the expansion problem, \ie, before
Line~\ref{alg:line-solve-expansion-problem} in
Algorithm~\ref{alg:adversarial-approach}, we first solve the
relaxation~\textsf{Reduced Relaxation} (except for the first
iteration), \ie, we solve the expansion
problem only \wrt\ the last added scenario; see
Section~\ref{subsec:computing-lower-bounds}.
Second, we solve the mixed-integer convex
relaxation~(\ref{eq:expansion-with-flow-directions-relaxation}) \wrt\
$S$, \ie, we consider all found worst-case scenarios.
After solving each of these relaxations, we check if the obtained
solution is feasible for the network design
problem~(\ref{eq:expansion-with-flow-directions}) by solving the
latter with fixed expansion decisions.
If this is the case, we directly go to
Line~\ref{alg:line-begin-check-robust-feasibility} of
Algorithm~\ref{alg:adversarial-approach} and check robust feasibility
of the obtained network design.
If applicable, we also update the best known lower bound that we add
to the upcoming MINLPs or convex relaxations to bound the objective
value from below.
The main intuition behind the approach~\convexHeur consists of
avoiding to solve the challenging
MINLP~(\ref{eq:expansion-with-flow-directions}), whose size increases
from iteration to iteration, by solving an MINLP of ``fixed'' size
(\textsf{Reduced Relaxation}) or a mixed-integer second-order cone
problem~(\ref{eq:expansion-with-flow-directions-relaxation}).

We finally note that the models are implemented in
\textsf{Python~3.7} with \textsf{Pyomo~6.7.0}
(\textcite{Bynum_et_al:2021}) and we solve the
models using~\textsf{Gurobi~10.0.3} (\textcite{gurobi}).
The computations are carried out on a single node of a
server\footnote{\url{https://elwe.rhrk.uni-kl.de/elwetritsch/hardware.shtml}}
with~\textsf{Intel XEON SP 6126} CPUs.
Further, we set a memory limit of \SI{64}{\giga\byte}, a
total time limit of~\SI{24}{\hour}, and limit the number of
\textsf{threads} to 4.
Additionally, we use the \textsf{Python} package \textsf{perprof-py}
(\textcite{SoaresSiqueira2016}) to
produce the performance profiles as described
in~\textcite{Dolan2002}.

\subsection{Numerical Results}
\label{sec:numerical-results}

We now apply the two presented variants~\minlpAcyclic and \convexHeur
of Algorithm~\ref{alg:adversarial-approach} to the gas network
instances described in Section~\ref{sec:gas-instances} and the
uncertainty sets described in Section~\ref{sec:modeling-uncertainty}.
Consequently, for each network, we obtain four different instances.
For most of the instances checking robust
feasibility, \ie, solving the adversarial problems, only has a
moderate contribution to the overall runtimes.
Relative to the runtimes of the algorithm, the total time spent to
solve the adversarial
problems~\eqref{eq:maximal-absolute-loadflow-connectedComponents},
which are MILPs, is below \SI{1.3}{\percent}.
For the more challenging nonlinear adversarial
problems~\eqref{eq:maximum-potential-difference},
the median of the aggregated
runtimes relative to the runtimes of the algorithm is
below~\SI{12.5}{\percent}.
Only in some cases this percentage increases to at
most~\SI{88.26}{\percent}, which is often the case if the algorithm
only needs two or less iterations.
For obtaining these moderate runtimes regarding the
aggregated runtimes of the adversarial problems,
Lemma~\ref{lemma:entry-exit-maximum-pot-diff} is key.
In particular, the approach benefits from the property that utility
networks typically contain a small number
of sources, \eg, the considered instances contain $3$ sources.
Since the runtimes of the adversarial problems are moderate compared
to the total runtimes of the algorithm in most cases, we only focus on the
total runtimes of
Algorithm~\ref{alg:adversarial-approach} in the following.

\subsubsection{Robustifying Existing Networks}

We start by applying our approach to robustify the existing gas
networks~\textsf{GasLib-40} and~\textsf{GasLib-60} by
building new pipes in parallel to existing ones.
In this case, the majority of pipes is already existing and we
selectively expand the capacity of the network to resolve bottlenecks
and to guarantee robust feasibility.
In Table~\ref{table:minlp-acyclic-existing-40-60},
we summarize the statistics
of the total runtimes and the number of added adversarial scenarios
for the plain version of Algorithm~\ref{alg:adversarial-approach},
\ie, for the variant~\minlpAcyclic.
Analogously, we summarize the statistics for the variant~\convexHeur in
Table~\ref{table:conv-heur-acyclic-existing-40-60}.
For these instances at most two adversarial scenarios
are sufficient to compute a robust network.
We emphasize that this small number of adversarial scenarios can be
most likely traced back to the
typical structure of gas networks that a small number of sources can
supply the majority of sinks.
This usually leads to a small number of worst-case scenarios as
illustrated in Section~\ref{sec:worst-case-scenarios}.
Regarding the runtimes, the variant~\convexHeur is slightly faster on
the majority of instances but not on every instance.
This can be explained by the observation that in the
approach~\convexHeur all of the
expansion MINLPs~(\ref{eq:expansion-with-flow-directions}) could be
solved by the relaxations, \ie, by either the \textsf{Reduced
  Relaxation} or the mixed-integer second-order cone
relaxation~(\ref{eq:expansion-with-flow-directions-relaxation}).
We note that this is not the case in general and also does not hold
for the following instances.
Overall, both variants of Algorithm~\ref{alg:adversarial-approach} are
very effective to robustify existing gas networks.

{
  \footnotesize
  \begin{table}
    \caption{Runtimes and number of adversarial scenarios of the
      approach~\minlpAcyclic.
      Left: Instances \wrt\ \mbox{\textsf{unchanged GasLib-40}}.
      Right: Instances \wrt\ \mbox{\textsf{unchanged GasLib-60}}.}
    \centering
    \begin{tabular}{lS[table-format=3.2]S[table-format=4.2]S[table-format=4.2]}
\toprule
\#Solved & {4 of 4} &  &  \\
\midrule
 & {Min} & {Median} & {Max} \\
\midrule
\#Iterations & 2 & 3 & 3 \\
\#Scenarios & 1 & 2 & 2 \\
Runtime (s) & 807.65 & 1395.33 & 1578.68 \\
\bottomrule
\end{tabular}

    \begin{tabular}{lS[table-format=4.2]S[table-format=4.2]S[table-format=4.2]}
\toprule
\#Solved & {4 of 4} &  &  \\
\midrule
 & {Min} & {Median} & {Max} \\
\midrule
\#Iterations & 2 & 2 & 2 \\
\#Scenarios & 1 & 1 & 1 \\
Runtime (s) & 1117.37 & 1175.83 & 3009.57 \\
\bottomrule
\end{tabular}

    \label{table:minlp-acyclic-existing-40-60}
  \end{table}
}
{
  \footnotesize
  \begin{table}
    \centering
    \caption{Runtimes and number of adversarial scenarios of the
      approach~\convexHeur.
      Left: Instances \wrt\ {\textsf{unchanged \mbox{GasLib-40}}}.
      Right: Instances \wrt\ \textsf{unchanged GasLib-60}.}
    \begin{tabular}{lS[table-format=3.2]S[table-format=4.2]S[table-format=4.2]}
\toprule
\#Solved & {4 of 4} &  &  \\
\midrule
 & {Min} & {Median} & {Max} \\
\midrule
\#Iterations & 2 & 3 & 3 \\
\#Scenarios & 1 & 2 & 2 \\
Runtime (s) & 332.21 & 1149.98 & 2042.90 \\
\bottomrule
\end{tabular}

    \begin{tabular}{lS[table-format=3.2]S[table-format=3.2]S[table-format=4.2]}
\toprule
\#Solved & {4 of 4} &  &  \\
\midrule
 & {Min} & {Median} & {Max} \\
\midrule
\#Iterations & 2 & 2 & 2 \\
\#Scenarios & 1 & 1 & 1 \\
Runtime (s) & 564.06 & 995.62 & 1037.74 \\
\bottomrule
\end{tabular}

    \label{table:conv-heur-acyclic-existing-40-60}
  \end{table}
}

\subsubsection{Extending Backbone Networks}

{
  \footnotesize
  \begin{table}
    \centering
    \caption{Runtimes and number of adversarial scenarios of the
      approach~\minlpAcyclic.
      Left: Instances \wrt\ \mbox{\textsf{spanning tree GasLib-40}}.
      Right: Instances \wrt\ \textsf{spanning tree GasLib-60}.}
    \begin{tabular}{lS[table-format=3.2]S[table-format=4.2]S[table-format=4.2]}
\toprule
\#Solved & {4 of 4} &  &  \\
\midrule
 & {Min} & {Median} & {Max} \\
\midrule
\#Iterations & 2 & 2 & 2 \\
\#Scenarios & 1 & 1 & 1 \\
Runtime (s) & 951.95 & 6281.03 & 7818.14 \\
\bottomrule
\end{tabular}

    \begin{tabular}{lS[table-format=3.2]S[table-format=4.2]S[table-format=4.2]}
\toprule
\#Solved & {4 of 4} &  &  \\
\midrule
 & {Min} & {Median} & {Max} \\
\midrule
\#Iterations & 2 & 3 & 3 \\
\#Scenarios & 1 & 2 & 2 \\
Runtime (s) & 274.42 & 1743.79 & 2875.57 \\
\bottomrule
\end{tabular}

    \label{table:minlp-acyclic-spanning-40-60}
  \end{table}
}
{
  \footnotesize
  \begin{table}
    \centering
    \caption{Runtimes and number of adversarial scenarios of the
      approach~\convexHeur.
      Left: Instances \wrt\ \mbox{\textsf{spanning tree GasLib-40}}.
      Right: Instances \wrt\ \textsf{spanning tree GasLib-60}.}
    \begin{tabular}{lS[table-format=3.2]S[table-format=3.2]S[table-format=3.2]}
\toprule
\#Solved & {4 of 4} &  &  \\
\midrule
 & {Min} & {Median} & {Max} \\
\midrule
\#Iterations & 2 & 2 & 2 \\
\#Scenarios & 1 & 1 & 1 \\
Runtime (s) & 312.49 & 576.27 & 726.63 \\
\bottomrule
\end{tabular}

    \begin{tabular}{lS[table-format=3.2]S[table-format=3.2]S[table-format=4.2]}
\toprule
\#Solved & {4 of 4} &  &  \\
\midrule
 & {Min} & {Median} & {Max} \\
\midrule
\#Iterations & 2 & 3 & 3 \\
\#Scenarios & 1 & 2 & 2 \\
Runtime (s) & 215.01 & 805.01 & 1954.97 \\
\bottomrule
\end{tabular}

    \label{table:conv-heur-acyclic-spanning-40-60}
  \end{table}
}

We now consider the case that a spanning tree is given as the existing
network and we expand this network by new pipes, which are not
necessarily in parallel to the existing ones.
We summarize the statistics of the considered approaches in
Tables~\ref{table:minlp-acyclic-spanning-40-60}
and~\ref{table:conv-heur-acyclic-spanning-40-60}.
As before the number of adversarial scenarios is very low, which can be
explained as in the previous section.
Regarding the runtimes, the approach~\convexHeur significantly
outperforms the plain version of
Algorithm~\ref{alg:adversarial-approach}.
For the \textsf{GasLib-40}, this can again be explained by the fact
that all occurring MINLPs could be solved by the relaxations.
This is not the case for all instances of \textsf{GasLib-60}.
However, the majority of the obtained lower bounds for the optimal
objective value of the corresponding MINLP is close to the optimal
value.
More precisely, the mixed-integer second-order cone
relaxation~\eqref{eq:expansion-with-flow-directions-relaxation} solves
the MINLP $5$ out of $7$ times.
Further, the gap\footnote{We compute the gap as proposed by
  \textsf{CPLEX} under
  \url{https://www.ibm.com/docs/en/icos/22.1.1?topic=parameters-relative-mip-gap-tolerance}.}
between the optimal objective value of the relaxation
and the one of the corresponding MINLP is at most~\SI{0.85}{\percent}.
Additionally, the \textsf{Reduced Relaxation} solves $4$ out of $7$
times the MINLP to optimality and the maximal gap
is~\SI{25}{\percent}.
Thus, for the considered instances, solving additional relaxations
significantly speeds up the solution process.
Overall, the approach~\convexHeur is to be preferred to extend
existing networks in a robust way.

\subsubsection{Greenfield Approach}

We finally turn to the greenfield approach in which we design a
network from scratch.
As expected this setup is significantly more challenging than the
previous ones, which is also reflected in the computational results.
As outlined in Table~\ref{table:minlp-acyclic-greenfield-40-60},
the plain version of Algorithm~\ref{alg:adversarial-approach} can only
solve a single instance for
the \textsf{GasLib-40} and a single one for the~\textsf{GasLib-60}
network.
Applying the enhanced variant~\convexHeur significantly improves the
performance.
In particular, it can solve $3$ out of $4$ instances for the
\textsf{GasLib-40} and a single one for the \textsf{GasLib-60}; see
Table~\ref{table:minlp-convex-heur-greenfield-40-60}.
Compared to the previous cases, one can observe that
slightly more adversarial scenarios are necessary to build a robust
network from scratch.
However, the approach still requires only a moderate amount of worst-case
scenarios.
We note that the number of computed adversarial scenarios matches the
number of sources of the network, which is in line with
the explanations provided in
Section~\ref{sec:worst-case-scenarios}.
The improved performance \wrt\ the runtimes of the
approach~\convexHeur can again be explained by tight gaps \wrt\ the objective
values of the relaxations and the corresponding MINLPs.
More precisely, the \textsf{Reduced Relaxation} solves the MINLP $4$
out of $9$ times and the gap is at most~\SI{1.65}{\percent}.
The mixed-integer second-order code
relaxation~\eqref{eq:expansion-with-flow-directions-relaxation} solves
the MINLP $9$ out of $9$ times.
The good performance \wrt\ the gap between the optimal objective value of
the mixed-integer convex
relaxation~\eqref{eq:expansion-with-flow-directions-relaxation} and
the MINLP~\eqref{eq:expansion-with-flow-directions} is in line with
the computational results of~\textcite{Sanches_et_al:2016}.
However, we note that the runtimes for the
relaxation~\eqref{eq:expansion-with-flow-directions-relaxation} and
the MINLP~\eqref{eq:expansion-with-flow-directions}
drastically increase from iteration to iteration.
For the unsolved instances, this results
in reaching the time limit of \SI{24}{\hour}.
In these cases, for the MINLPs and the mixed-integer convex
relaxations~\eqref{eq:expansion-with-flow-directions-relaxation}, it
seems to be the case that proving optimality is
the biggest challenge for the solvers during the solution process.
Thus, both approaches cannot solve all instances within the
set time limit if designing a network from scratch.

Overall, our computational study based on real-world instances reveals
two main insights.
(i) For the considered instances, only a moderate number of
worst-case scenarios is
necessary to compute a robust network design, which makes the
presented approach very effective in practice.
(ii) The variant~\convexHeur significantly outperforms the plain
version of Algorithm~\ref{alg:adversarial-approach}.
Thus, for most of the instances, it is worth solving additional
relaxations to provide lower bounds for the objective value of the
challenging MINLPs, which then speed up the overall solution process.
We finally highlight this effect by the performance
profile in Figure~\ref{fig:performance-profile-runtime}.

{
  \footnotesize
  \begin{table}
    \caption{Runtimes and number of adversarial scenarios of the
      approach~\minlpAcyclic.
      Left: Instances \wrt\ \mbox{\textsf{greenfield GasLib-40}}.
      Right: Instances \wrt\ \textsf{greenfield GasLib-60}.}
    \begin{tabular}{lS[table-format=4.2]S[table-format=4.2]S[table-format=4.2]}
\toprule
\#Solved & {1 of 4} &  &  \\
\midrule
 & {Min} & {Median} & {Max} \\
\midrule
\#Iterations & 2 & 2 & 2 \\
\#Scenarios & 1 & 1 & 1 \\
Runtime (s) & 7320.85 & 7320.85 & 7320.85 \\
\bottomrule
\end{tabular}

    \begin{tabular}{lS[table-format=5.2]S[table-format=5.2]S[table-format=5.2]}
\toprule
\#Solved & {1 of 4} &  &  \\
\midrule
 & {Min} & {Median} & {Max} \\
\midrule
\#Iterations & 3 & 3 & 3 \\
\#Scenarios & 2 & 2 & 2 \\
Runtime (s) & 81895.84 & 81895.84 & 81895.84 \\
\bottomrule
\end{tabular}

    \centering
    \label{table:minlp-acyclic-greenfield-40-60}
  \end{table}
}

{
  \footnotesize
  \begin{table}
    \caption{Runtimes and number of adversarial scenarios of the
      approach~\convexHeur.
      Left: Instances \wrt\ \mbox{\textsf{greenfield GasLib-40}}.
      Right: Instances \wrt\ \textsf{greenfield GasLib-60}.}
    \centering
    \mbox{\begin{tabular}{lS[table-format=4.2]S[table-format=5.2]S[table-format=5.2]}
\toprule
\#Solved & {3 of 4} &  &  \\
\midrule
 & {Min} & {Median} & {Max} \\
\midrule
\#Iterations & 2 & 4 & 4 \\
\#Scenarios & 1 & 3 & 3 \\
Runtime (s) & 4066.79 & 39963.87 & 50183.53 \\
\bottomrule
\end{tabular}

      \begin{tabular}{lS[table-format=5.2]S[table-format=5.2]S[table-format=5.2]}
\toprule
\#Solved & {1 of 4} &  &  \\
\midrule
 & {Min} & {Median} & {Max} \\
\midrule
\#Iterations & 3 & 3 & 3 \\
\#Scenarios & 2 & 2 & 2 \\
Runtime (s) & 51290.35 & 51290.35 & 51290.35 \\
\bottomrule
\end{tabular}
}
    \label{table:minlp-convex-heur-greenfield-40-60}
  \end{table}
}

\begin{figure}
  \centering
  \begin{tikzpicture}
  \begin{semilogxaxis}[legend cell align={left}, const plot,
  cycle list={
  {colorbrewerOrange,solid},
  {colorbrewerPurple,dashed}},
xmin=1, xmax=14.25,
                ymin=-0.003, ymax=1.003,
                ymajorgrids,
                ytick={0,0.2,0.4,0.6,0.8,1.0},
                ylabel={Proportion of Instances},title={},
                legend pos={south east},
                width=0.7\linewidth, %
                height=0.5\linewidth %
            ]
  \addplot+[mark=none, ultra thick] coordinates {
    (1.0000,0.9500)
    (1.2763,0.9500)
    (1.2941,1.0000)
    (14.2452,1.0000)
  };
  \addlegendentry{Reduced\_Convex}
  \addplot+[mark=none, ultra thick] coordinates {
    (1.0000,0.0500)
    (1.0767,0.1000)
    (1.1810,0.1500)
    (1.2134,0.2000)
    (1.2415,0.2500)
    (1.2763,0.3000)
    (1.2941,0.3000)
    (1.2950,0.3500)
    (1.4709,0.4000)
    (1.5967,0.4500)
    (1.8002,0.5000)
    (2.0935,0.5500)
    (2.1662,0.6000)
    (2.4311,0.6500)
    (3.0463,0.7000)
    (3.1890,0.7500)
    (5.3356,0.8000)
    (8.6441,0.8500)
    (13.5668,0.9000)
    (14.2452,0.9000)
  };
  \addlegendentry{MINLP\_Acyclic}
  \end{semilogxaxis}
\end{tikzpicture}
  \caption{Performance profile for the runtimes regarding
    all instances that are solved by at least one approach.}
  \label{fig:performance-profile-runtime}
\end{figure}
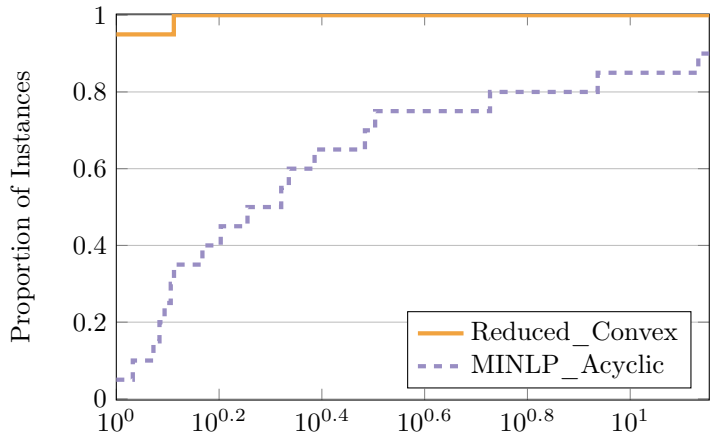

\section{Conclusion}
\label{sec:conclusion}

We studied an adjustable robust mixed-integer nonlinear network design
problem with \loadscenario uncertainties \revTwo{for networks without
  controllable elements}.
To this end, we considered nonlinear potential-based flows, which allow
to model different types of utility networks such as gas, hydrogen, or
water networks.
For the considered problem, we developed an exact adversarial approach
that
exploits the structural properties of the network and flows to obtain
a robust network design that is protected \rev{against} \loadscenario
uncertainties.
Finally, we demonstrated the applicability of the developed approach
using the example of realistic gas networks instances.
The computational results highlight that for these instances only a
very small number of worst-case scenarios is necessary to obtain a
robust network, which makes the presented approach very efficient in
practice.

One promising direction for future research consists of developing
valid inequalities for network expansion problems with nonlinear
potential-based flows.
In contrast to the large literature on valid inequalities for network
design problems with capacitated linear flows, the corresponding
literature on potential-based flows is rather scarce.
\rev{Moreover, including controllable elements such as compressors in
  gas networks or pumps in water networks is a challenging but
  important future research direction
  that leads to solving challenging nonconvex bilevel problems; see
  Remarks~\ref{remark:bilevel-passive-network}
  and~\ref{remark:active-elements}.}
\revTwo{Certain ideas to address controllable elements in specific nonconvex
  bilevel problems can already be found in the literature \parencite{Plein2021}.
  However, these techniques themselves require rather strong
  assumptions such as that the controllable elements are not placed on
  cycle arcs of the network.
  Nevertheless, these ideas might be a good starting point to
  generalize the model discussed in this paper.}

\section*{Acknowledgements}

The authors thank the Deutsche Forschungsgemeinschaft for their
support within project B08 in the
Sonderforschungsbereich/Transregio 154 ``Mathematical Modelling,
Simulation and Optimization using the Example of Gas Networks''.
The computations were executed on the high performance cluster
``Elwetritsch'' at the TU Kaiserslautern, which is part of the
``Alliance of High Performance Computing Rheinland-Pfalz'' (AHRP).
We kindly acknowledge the support of RHRK.

\section*{Declaration}

The authors declare that they do not have any conflict of interest.

\printbibliography

\appendix
\section{Linearization of Bilinear Terms}
\label{sec:appendix}

We now derive an exact reformulation of the bilinear terms in the
left-hand sides of
Constraints~\eqref{cons:potential-link-flow-dir-vars} using McCormick
inequalities.
To do so, for each arc~$\arc=(u,v,\labelArc)$, we linearize the
term~$2y_{\arc}^{\loadFlowVec}(\potential^{\loadFlowVec}_{\node} -
\potential^{\loadFlowVec}_{\otherNode})$
by introducing the additional variable~$\gamma^{\loadFlowVec}_{\arc}
\in \reals$ and the
four inequalities
\begin{subequations} \label{eq:mcCormick}
  \begin{align}
    & 2(\potential^{\loadFlowVec}_{u} - \potential^{\loadFlowVec}_{v})
      - 2(1-y^{\loadFlowVec}_{\arc}) (\ubPot_u-\lbPot_v)
      \leq
      \gamma^{\loadFlowVec}_{\arc},
      \label{mcCormick:one}
    \\
    & 2(\potential^{\loadFlowVec}_{u} - \potential^{\loadFlowVec}_{v})
      - 2(1-y^{\loadFlowVec}_{\arc}) (\lbPot_u-\ubPot_v)
      \geq
      \gamma^{\loadFlowVec}_{\arc},
      \label{mcCormick:two}
    \\
    & 2 y^{\loadFlowVec}_{\arc} (\lbPot_u - \ubPot_v) \leq
      \gamma^{\loadFlowVec}_{\arc},
      \label{mcCormick:three}
    \\
    & 2 y^{\loadFlowVec}_{\arc} (\ubPot_u-\lbPot_{v}) \geq
      \gamma^{\loadFlowVec}_{\arc}.
      \label{mcCormick:four}
  \end{align}
\end{subequations}
If $y^{\loadFlowVec}_{\arc} = 1$ holds, then from
Constraints~\eqref{mcCormick:one} and~\eqref{mcCormick:two}, it
follows $\gamma_{\arc}^{\loadFlowVec} =
2(\potential^{\loadFlowVec}_{u} - \potential^{\loadFlowVec}_{v})$.
Further, every potential vector~$\potential \in \reals^{\nodes}$ that
satisfies the
potential bounds~\eqref{eq:expansion:potential-bounds} also
satisfies together with~$\gamma^{\loadFlowVec}_{\arc}$
the Constraints~\eqref{mcCormick:three}
and~\eqref{mcCormick:four}.
If $y^{\loadFlowVec}_{\arc} = 0$ holds, then from
Constraints~\eqref{mcCormick:three} and~\eqref{mcCormick:four}, it
follows $\gamma_{\arc}^{\loadFlowVec} = 0$.
Further, every potential vector~\mbox{$\potential \in
  \reals^{\nodes}$} that satisfies the
potential bounds~\eqref{eq:expansion:potential-bounds}, then also
satisfies together with~$\gamma^{\loadFlowVec}_{\arc}$
the Constraints~\eqref{mcCormick:one} and~\eqref{mcCormick:two}.
Thus, for every vector $(\potential^{\loadFlowVec}_{u},
\potential^{\loadFlowVec}_{v}, \gamma^{\loadFlowVec}_{\arc})$ that
satisfies~\eqref{eq:mcCormick}, it holds
$\gamma_{\arc}^{\loadFlowVec}=2y_{\arc}^{\loadFlowVec}(\potential^{\loadFlowVec}_{\node}
- \potential^{\loadFlowVec}_{\otherNode})$.

Using the previous linearization, we can replace the bilinear terms in
the left-hand sides of \eqref{cons:potential-link-flow-dir-vars}
by~$\gamma^{\loadFlowVec}_{\arc}$.
Consequently, Constraints~\eqref{eq:mcCormick} together with the
constraints
\begin{subequations} \label{cons:linearization-potential-link}
  \begin{align}
    & (\potential^{\loadFlowVec}_{\otherNode} -
      \potential^{\loadFlowVec}_{\node})
      +
      \gamma^{\loadFlowVec}_{\arc}
      =
      \potFunc_{\arc}(\abs{\massflow^{\loadFlowVec}_{\arc}}), \quad
      \arc=(\node,\otherNode, \labelArc)
      \in \exArcs,
    \\
    &
      (\potential^{\loadFlowVec}_{\otherNode} -
      \potential^{\loadFlowVec}_{\node})
      +
      \gamma^{\loadFlowVec}_{\arc}
      \leq
      \potFunc_{\arc}(\abs{\massflow^{\loadFlowVec}_{\arc}}) +
      (1-\expVar_{\arc}) M^{+}_{\arc},
      \quad \arc=(\node,\otherNode, \labelArc)
      \in \expArcs,
    \\
    &
      (\potential^{\loadFlowVec}_{\otherNode} -
      \potential^{\loadFlowVec}_{\node})
      +
      \gamma^{\loadFlowVec}_{\arc}
      \geq
      \potFunc_{\arc}(\abs{\massflow^{\loadFlowVec}_{\arc}}) +
      (1-\expVar_{\arc}) M^{-}_{\arc},
      \quad \arc=(\node,\otherNode, \labelArc)
      \in \expArcs,
  \end{align}
\end{subequations}
leads to an equivalent reformulation of the
Constraints~\eqref{cons:potential-link-flow-dir-vars}.

Analogously, we can linearize the bilinear terms in the
Relaxation~\eqref{eq:expansion-with-flow-directions-relaxation}.
This then leads to a mixed-integer second-order cone problem for the
considered case of gas networks, in which the potential
function satisfies~$\potFunc_{\arc}(\abs{\massflow_{\arc}}) =  \Lambda_{\arc}
\massflow_{\arc}^{2}$.

We finally discuss that we can use the previous
linearization~\eqref{eq:mcCormick}
and~\eqref{cons:linearization-potential-link} with minor adaptions to
model the adversarial
problems~\eqref{eq:maximum-potential-difference}--%
\eqref{eq:maximum-arc-flow} as MINLPs.
Since the adversarial problems do not contain lower or upper potential
bounds, we have to replace these potential bounds in
Constraints~\eqref{mcCormick:one}--\eqref{mcCormick:four}
by valid bounds so that the optimal value
of an optimal solution to the adversarial
problems~\eqref{eq:maximum-potential-difference}--\eqref{eq:maximum-arc-flow}
is not changed.
To this end, for each arc~$\arc \in \arcs$, we can compute a lower and
upper arc flow bound \wrt\ the uncertainty set by
solving the optimization problems
\begin{equation*}
  \max_{\massflow^{\loadFlowVec}, \loadFlowVec}
  \quad \massflow^{\loadFlowVec}_{\arc} \quad
  \st \quad \eqref{eq:expansion:massflow-conservation}, \
  \eqref{eq:acyclic-inequalities},
  \, \loadFlowVec \in \uncertaintySet,
  \quad
  \min_{\massflow^{\loadFlowVec}, \loadFlowVec}
  \quad \massflow^{\loadFlowVec}_{\arc} \quad
  \st \quad \eqref{eq:expansion:massflow-conservation}, \
  \eqref{eq:acyclic-inequalities},
  \, \loadFlowVec \in \uncertaintySet.
\end{equation*}
Similar to Problem~\eqref{eq:mip-tighten-flow-bounds}, these problems
compute upper and lower arc flow bounds by solving an uncapacitated
linear flow model with the additional restriction of acyclic flows
over the given uncertainty set \rev{of load scenarios}.
We now denote a corresponding bound on the maximum
absolute arc flow of~$\arc$ by~$\tilde{\massflow}_{\arc}$.
From this, we obtain the inequalities
$-\Lambda_{\arc} \tilde{\massflow}_{\arc}^{2}
\leq
\potential^{\loadFlowVec}_{u} - \potential^{\loadFlowVec}_{v}
\leq
\Lambda_{\arc} \tilde{\massflow}_{\arc}^{2}
$,
which are valid for all feasible points of the adversarial
problems~\eqref{eq:maximum-potential-difference}--\eqref{eq:maximum-arc-flow}.
Consequently, for applying the McCormick inequalities and model the adversarial
problems~\eqref{eq:maximum-potential-difference}--\eqref{eq:maximum-arc-flow}
as MINLPs, we replace in the above linearization the
Constraints~\eqref{mcCormick:one}--\eqref{mcCormick:four} by
\begin{align*}
  2(\potential^{\loadFlowVec}_{u} - \potential^{\loadFlowVec}_{v})
  - 2(1-y^{\loadFlowVec}_{\arc}) \Lambda_{\arc} \tilde{\massflow}_{\arc}^{2}
  \leq
  \gamma^{\loadFlowVec}_{\arc},
  & \quad
    2(\potential^{\loadFlowVec}_{u} - \potential^{\loadFlowVec}_{v})
    + 2(1-y^{\loadFlowVec}_{\arc}) \Lambda_{\arc} \tilde{\massflow}_{\arc}^{2}
    \geq
    \gamma^{\loadFlowVec}_{\arc},\\
  \quad
  -2 y^{\loadFlowVec}_{\arc} \Lambda_{\arc} \tilde{\massflow}_{\arc}^{2} \leq
  \gamma^{\loadFlowVec}_{\arc},
  & \quad
    2 y^{\loadFlowVec}_{\arc} \Lambda_{\arc} \tilde{\massflow}_{\arc}^{2} \geq
    \gamma^{\loadFlowVec}_{\arc}.
\end{align*}
We note that for this linearization, we can use any
bound~$\tilde{\massflow}_{\arc}$ on the absolute flow.
Finally, we can fix
the potential level of an arbitrary node in the adversarial
problems~\eqref{eq:maximum-potential-difference}--\eqref{eq:maximum-arc-flow}
due to Lemma~\ref{lemma:uniquness-of-flows-passive-case}.

\end{document}